\newtheorem{theorem}{Theorem}[section]
\newtheorem{lemma}[theorem]{Lemma}
\newtheorem{corollary}[theorem]{Corollary}
\newtheorem{proposition}[theorem]{Proposition}
\numberwithin{equation}{section}
\newcommand{\meantmp}[2]{#1\langle{#2}#1\rangle}
\newcommand{\mean}[1]{\meantmp{}{#1}}
\newcommand{\RdN}{{\setR^{d\times N}}}
\newcommand{\Rd}{{\setR^{d}}}
\newcommand{\Ocal}{\mathcal{O}}
\newcommand{\diam}{\text{\rm diam}}
\newcommand{\dx}{\ensuremath{\,{\rm d} x}}
\newcommand{\dy}{\ensuremath{\,{\rm d} y}}
\newcommand{\dlambda}{\ensuremath{\,{\rm d} \lambda}}
\newcommand{\Acal}{{\mathcal{A}}}
\begin{document}

\title[Very weak solutions to systems with $p$-growth]{Existence of very weak solutions to elliptic systems of $p$-Laplacian type}\thanks{M.~Bul\'{\i}\v{c}ek's work is supported by the project LL1202  financed by the Ministry of Education, Youth and Sports, Czech Republic and by the University Centre for Mathematical Modelling, Applied Analysis and Computational Mathematics (Math~MAC) S.~Schwarzacher thanks the program PRVOUK~P47, financed by Charles University in Prague. M.~Bul\'{\i}\v{c}ek is a  member of the Ne\v{c}as Center for Mathematical Modeling}

\author[M.~Bul\'{\i}\v{c}ek]{Miroslav Bul\'{\i}\v{c}ek} 
\address{Mathematical Institute, Faculty of Mathematics and Physics, Charles University in Prague
Sokolovsk\'{a} 83, 186 75 Prague, Czech Republic}
\email{mbul8060@karlin.mff.cuni.cz}


\author[S.~Schwarzacher]{Sebastian Schwarzacher}
\address{Department of Mathematical Analysis, Faculty of Mathematics and Physics,  Charles University in Prague,
Sokolovsk\'{a} 83, 186 75 Prague, Czech Republic}
\email{schwarz@karlin.mff.cuni.cz}

\begin{abstract} We study vector valued solutions to non-linear elliptic partial differential equations with $p$-growth. Existence of a solution is shown in case the right hand side is the divergence of a function which is only $q$ integrable, where $q$ is strictly below but close to the duality exponent $p'$. It implies that possibly degenerate operators of $p$-Laplacian type are well posed in a larger class then the natural space of existence. The key novelty here is a refined a priori estimate, that recovers a duality relation between the right hand side and the solution in terms of weighted Lebesgue spaces.
\end{abstract}

\keywords{nonlinear elliptic systems, weighted estimates, existence, very weak solution, monotone operator, div--curl--biting lemma, weighted space, Muckenhoupt weights}
\subjclass[2010]{35D99,35J57,35J60,35A01}

\maketitle

\section{Introduction}
\label{S1}

Let $\Omega \subset \mathbb{R}^d$ be a Lipschitz domain and $S:\Omega \times \mathbb{R}^{d\times N}\to \mathbb{R}^{d\times N}$ be a Carath\'{e}odory mapping. We investigate the existence of a very weak solution $u:\Omega\to\setR^N$ with $N\in\setN$ of the system
\begin{equation}\label{EQ}
\begin{aligned}
\divergence S(\cdot ,\nabla u)&= \divergence |f|^{p-2}f&&\textrm{in } \Omega,\\
u&=0 &&\textrm{on } \partial \Omega.
\end{aligned}
\end{equation}
Here we assume growth, coercivity and monotonicity assumptions on $S$ related to the exponent $p\in (1,\infty)$. Explicitly, that there exist constants $C_1,C_2>0$ and $C_3\geq 0$, such that for all $z_1,z_2\in \mathbb{R}^{d\times N}$ and almost all $x\in \Omega$ there holds
\begin{align}
S(x,z_1)\cdot z_1 &\ge C_1|z_1|^p -C_3,\label{A1}\text{ coercivity}\\
|S(x,z_1)| &\le C_2|z_1|^{p-1}+C_3^\frac{p-1}{p},\label{A2}\text{ boundedness}\\
(S(x,z_1)-S(x,z_2))\cdot (z_1-z_2)&\ge 0,\text{ monotonicity}.\label{A3}
\end{align}
The model case is the $p$-Laplacian system
\begin{equation}\label{PL}
\begin{aligned}
\divergence \abs{\nabla u}^{p-2}\nabla u&= \divergence |f|^{p-2}f&&\textrm{in } \Omega,\\
u&=0 &&\textrm{on } \partial \Omega.
\end{aligned}
\end{equation}

It is well known, that if assumptions \eqref{A1}--\eqref{A3} are satisfied and $f\in L^p(\Omega;\RdN)$, then a solution of \eqref{EQ} exists with $\nabla u\in L^p(\Omega)$. It can for instance be shown by monotone operator theory. 
The starting point of our investigations is the following question.

\smallskip
{\em Q: Does for $f\in L^q(\Omega;\RdN)$ and $q\neq p$ a distributional solution $u$ of \eqref{EQ} exist, such that $\nabla u\in L^q(\Omega;\RdN)$?}

\smallskip
Under general assumptions \eqref{A1}--\eqref{A3} the answer to this question is not affirmative for all $q\in (1,\infty)$. This is well known due to the various counterexamples even in the simplest case $p=2$ and $f\equiv 0$. At the end of exponents smaller then $2$ we mention the non-smooth solutions constructed by Serrin~\cite{Ser65}. At the end of exponents larger than $2$ the non-smooth solutions constructed by Ne\v{c}as~\cite{Ne77}. The example of Serrin~\cite{Ser65} implies also that there can be no hope for uniqueness in the large class of $W^{1,q}_0(\Omega;\setR^N)$ with $q\in (1,p)$, unless stricter assumptions are available. 

However, it turns out that closely around $p$ the existence of solutions with natural integrability is true, even under the minimal assumptions \eqref{A1}--\eqref{A3}. 

%
More precisely, we are able to prove the following theorem.
\begin{theorem}\label{T1}
Let $\Omega \subset \mathbb{R}^{d}$ be a bounded Lipschitz domain and $S$ satisfy \eqref{A1}--\eqref{A3}. Then there is an $\epsilon$ depending on $C_1, C_2,d,N$, $p$ and $\Omega$, such that for all $q\in [p-\epsilon,p]$ the following holds.
If $f\in L^q(\Omega; \mathbb{R}^{d\times N})$, then there exists $u\in W^{1,q}_0(\Omega; \mathbb{R}^N)$ which is a distributional solution to \eqref{EQ}.

Moreover, there is a constant $c$ only depending on $C_1,C_2,p,q,d,N,\Omega$ and a constant $c_1$ depending additionally on $C_3$, such that
\begin{align}
\label{apri1}
\norm{\nabla u}_{L^q(\Omega;\RdN)}\leq c\norm{f}_{L^q(\Omega;\RdN)}+c_1.
\end{align}
\end{theorem}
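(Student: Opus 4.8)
The plan is to prove Theorem~\ref{T1} by a Lipschitz-truncation/a~priori-estimate argument combined with a compactness passage. The heart of the matter is the \emph{refined a~priori estimate} advertised in the abstract, so I would first establish, for a sufficiently regular approximating problem, a weighted estimate of the form
\begin{align}
\label{plan:weighted}
\int_\Omega \abs{\nabla u}^p\,\omega\dx \leq c\int_\Omega \abs{f}^p\,\omega\dx + c_1\int_\Omega \omega\dx,
\end{align}
valid for every Muckenhoupt weight $\omega\in A_{1}$ (or $A_{p/p'}$-type class) with $A_1$-constant controlled by a fixed universal bound, where $c$ depends only on that bound and on $C_1,C_2,p,d,N,\Omega$. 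This is where monotonicity \eqref{A3} and the structural bounds \eqref{A1}--\eqref{A2} enter: testing the equation with a Lipschitz truncation of $u$ localized by the weight, and using a Fefferman--Stein / good-$\lambda$ type inequality together with the self-improving property of $A_1$ weights, one upgrades the natural $L^p$ estimate to the weighted one. The div--curl--biting lemma named in the keywords is presumably the device that lets one pass this weighted bound through the nonlinear term $S(\cdot,\nabla u)$ without assuming continuity of $S$ in $x$.

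Second, I would convert the weighted estimate \eqref{plan:weighted} into an $L^q$ estimate for $q=p-\epsilon$ by duality. Given $f\in L^q$, one writes $\norm{\nabla u}_{L^q}^q = \int_\Omega \abs{\nabla u}^q$ and, for the right choice, tests \eqref{plan:weighted} against the weight $\omega = (M(\abs{\nabla u}^{?}))^{1-p/?}$ or more precisely against $\omega$ built from the maximal function of a suitable power of $\abs{\nabla u}$, so that $\omega\in A_1$ with uniformly bounded constant and $\int\abs{\nabla u}^p\omega \gtrsim \norm{\nabla u}_{L^q}^q$ while $\int\abs{f}^p\omega \lesssim \norm{f}_{L^q}^{p}\norm{\nabla u}_{L^q}^{q-p}$ by Hölder with exponents tied to the gap $p-q$. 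Reorganizing and absorbing gives \eqref{apri1} once $\epsilon$ (hence the gap $p-q$) is small enough that the Hölder exponents are admissible and the $A_1$-constants stay below the universal threshold from step one; this is exactly the point at which $\epsilon$ must be chosen depending on $C_1,C_2,p,d,N,\Omega$.

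Third, for the existence itself I would set up an approximation: replace $|f|^{p-2}f$ by a sequence $F_n\to |f|^{p-2}f$ in $L^{q'}$... more cleanly, approximate $f$ by smooth $f_n\to f$ in $L^q$ and solve \eqref{EQ} with $f_n$ by standard monotone operator theory, obtaining $u_n\in W^{1,p}_0$. The uniform bound \eqref{apri1} (with $f_n$ on the right) gives a subsequence $u_n\rightharpoonup u$ weakly in $W^{1,q}_0(\Omega;\RN)$; by \eqref{A2} and the $L^q$ bound, $S(\cdot,\nabla u_n)$ is bounded in $L^{q/(p-1)}$, and since $q>p-1$ (true once $\epsilon<p-1$, shrinking $\epsilon$ if needed) this is a reflexive space, so $S(\cdot,\nabla u_n)\rightharpoonup \overline{S}$. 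To identify $\overline{S}=S(\cdot,\nabla u)$ one needs a.e.\ convergence of $\nabla u_n$; here the classical $W^{1,p}$-Minty trick is unavailable because test functions are only in $W^{1,q'}$, so one invokes the Lipschitz-truncation method (or the div--curl--biting lemma) to show $\nabla u_n\to\nabla u$ in measure on $\Omega$, whence Carathéodory continuity of $S$ and Vitali's theorem finish the identification. Passing to the limit in the weak formulation against $C_c^\infty$ test functions then yields that $u$ is a distributional solution.

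The main obstacle is step one: the weighted a~priori estimate \eqref{plan:weighted} with an $A_1$-constant that is \emph{universal}, i.e.\ independent of the weight itself. A naive weighted Calderón--Zygmund estimate would produce a constant blowing up with the $A_1$-characteristic of $\omega$, which is useless for the self-referential choice of $\omega$ in step two. Obtaining the linear dependence on $[\omega]_{A_1}$ (or the relevant Fujii--Wilson / sparse-domination constant) for a possibly degenerate, merely monotone operator with no regularity in $x$ is the genuinely delicate part, and I expect it to consume the bulk of the paper; the Lipschitz truncation in step three is by now fairly standard once the a~priori bound is in hand. A secondary technical point is the Lipschitz-domain geometry: the weighted estimates and the truncation near $\partial\Omega$ require the boundary to be flat enough at small scales, which is why $\Omega$ appears among the parameters determining $\epsilon$.
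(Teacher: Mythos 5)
Your overall skeleton --- a weighted a~priori estimate, approximation of $f$ by truncation, monotone operator theory for the approximate problems, and a compensated-compactness device to identify the nonlinear limit --- is the same as the paper's. But there is a genuine gap, and it sits exactly where you place the weight. In your step two you choose $\omega$ self-referentially from $M(\abs{\nabla u}^s)$, which is Iwaniec's device: it yields an \emph{a~priori} estimate for a solution already known to satisfy $\nabla u\in L^q$, which is precisely the 1992 state of the art that the paper says left existence open. For the existence argument the weight must be \emph{fixed once and for all, independently of the approximating sequence}; the paper's key idea is to take $\omega=(M(\abs{f}+1))^{-(p-q)}$, built from the \emph{data}. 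With this choice no Hölder/absorption gymnastics are needed in your step two: since $\abs{f}\le M(\abs{f}+1)$ one has $\int_\Omega\abs{f}^p\omega\dx\le\int_\Omega (M(\abs{f}+1))^{q}\dx\lesssim\norm{f}_{L^q}^q+\abs{\Omega}$, and Young's inequality gives $\int_\Omega\abs{\nabla u}^q\dx\le c\int_\Omega\abs{\nabla u}^p\omega\dx+c\int_\Omega (M(\abs{f}+1))^q\dx$, which is \eqref{apri1}. Your worry about a ``universal'' $A_1$-constant is not the real obstruction: for weights of the form $(Mh)^{\alpha}$, $\alpha\in(0,1)$, the Coifman--Rochberg lemma (Lemma~\ref{cor:dual}) gives an $A_1$-constant depending only on $\alpha$ and $d$. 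Also, the paper's mechanism for the weighted estimate is not Fefferman--Stein/good-$\lambda$: it tests the equation with a ``relative truncation'' of $u$ adapted to the open sets $\{Mg>\lambda\}$ and then integrates in $\lambda$ against $\lambda^{-1-\varepsilon}\,d\lambda$, which is what makes the constant independent of $h$.

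The second, fatal consequence of the self-referential weight appears in your step three. With only $\nabla u_n$ bounded in $L^q$ and $S(\cdot,\nabla u_n)$ bounded in $L^{q/(p-1)}$, the products $S(\cdot,\nabla u_n)\cdot\nabla u_n$ are not even bounded in $L^1$ when $q<p$ (the exponents satisfy $\frac1q+\frac{p-1}{q}=\frac pq>1$), so neither the unweighted div--curl--biting lemma nor the standard Lipschitz-truncation identification of the limit applies; this is exactly the duality loss that kept the problem open. The fixed weight $\omega=(M(\abs{f}+1))^{-\varepsilon}\in\Acal_p$ (for $\varepsilon<p-1$) restores it: $\nabla u_n$ is bounded in $L^p_\omega$ and $S(\cdot,\nabla u_n)$ in $L^{p'}_\omega$, which are dual, and the \emph{weighted} biting div--curl lemma of Theorem~\ref{T5} gives weak $L^1$ convergence of $S(\cdot,\nabla u_n)\cdot\nabla u_n\,\omega$ on biting sets $E_j$; Minty's trick carried out in $L^p_\omega$ then identifies $\overline S=S(\cdot,\nabla u)$. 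Note also that the biting lemma does not deliver convergence in measure of $\nabla u_n$, as you assert; the paper never proves pointwise convergence of gradients and does not need to.
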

As was mentioned before, this result is optimal with respect to the generality of the assumptions \eqref{A1}--\eqref{A3}. Let us briefly collect what was already known before. 

In case $q>p$, the existence of a solution in $W^{1,p}_0(\Omega)$ is obvious. The integrability improvement follows by an argument known as Gehring's Lemma. With the minimal assumptions \eqref{A1}--\eqref{A3} it was proved in~\cite[Theorem~7.8]{KriMin06}. See also the more classical result~\cite[Theorem 4.1]{GiaGiu82} and for  an overview and more details~\cite{Min10}.
For the $p$-Laplacian, it is known that \eqref{apri1} holds also in the case of large exponents $q\in [p,\infty)$~\cite{Iwa83,CafPer98} and beyond~\cite{DiBMan93,DieKapSch11}. 

In the special case $p=2$, it was possible to show that there exists an $\epsilon$ depending on $C_1,C_2$ and $\Omega$ alone, such that existence, uniqueness and regularity are available in case $f\in L^q(\Omega)$ and $q\in [2-\epsilon,2+\epsilon]$~\cite{Bul12}. However, it needed the stricter assumption, that $S$ is Lipschitz continuous with respect to $z$. Existence, Uniqueness and Regularity for the full range $q\in (1,\infty)$ has recently been shown to hold, in case $p=2$ with $S$ having additional Uhlenbeck type structure~\cite{BulDieSch15}. 

In the case of $p\neq 2$ and $q<p$ very little is known about the existence of a distributional solution. The situation is quite delicate, since even for bounded domains the existence of any object of solution is not obvious. The only existence result available related to powers $q<p$ for the p-Laplacian are restricted to the scalar case $N=1$ and to a better structure of the right hand side. More precisely, when the right hand side is a function (or a Radon measure)~\cite{BoGaVa95}. But even for solutions to \eqref{PL} the existence of solutions was not known in the case of $p\neq 2$ and $q<p$. This seems astonishing, since the a priori estimates~\eqref{apri1} are available for solutions to \eqref{PL} in case $\setR^d=\Omega$ ever since the seminal work of Iwaniec~\cite{Iwa92} in 1992. There it is showed, that there is an $\epsilon>0$ that provided a distributional solution to the p-Laplace exists, it holds already \eqref{apri1} for $q\in [p-\epsilon,p]$. Greco, Iwaniec and Sbordone could stretch the existence frame slightly by showing existence provided the right hand side is in the grande Lebesgue space, $f\in L^{p)}(\Omega;\RdN)$~\cite{GreIwaSbo97}. It is a space, that is slightly larger then $L^p(\Omega;\RdN)$ quantified in terms of logarithmic powers. Later the a priori estimates could be extended to the parabolic case in~\cite{KinLew02}.

After having collected all above efforts we conclude that {\em the existence was open ever since 1992 for $q<p$ and could not be closed ever since.}
Therefore our main result is the existence of a distributional solution in case $q\in [p-\epsilon,p]$ and not the estimate \eqref{apri1}, although it is new in this general form.

The key observation is, that the $L^q$ a priori estimate alone is not suitable to establish solutions to non-linear operators.
This is due to the fact that the a priori estimates inherit only weak compactness or less and the only possible way to mach weak compactness and non-liniarities is via convexity. In the setting here it is reflected via the use of the monotonicity, for instance via the Minty method. However, the method seemed lost the moment the weak limit is not a suitable test function anymore. Only recently a new point of view was established, which allowed to regain a duality relation between $f$ and $\nabla u$~\cite{BulDieSch15}. 
The duality is gained by replacing $L^q$ estimates with weighted $L^p_\omega$ estimates, where the weight is chosen in terms of the right hand side $f$. It has to be chosen in such a way that it belongs to the Muckenhoupt class $\mathcal{A}_q$. This is necessary since it is known, that many linear and sub-linear operators including the Laplace and the Maximal operator are bounded restrictively in these Muckenhoupt classes.

The key novelty, which is of its sovereign interest is the a priori estimate in the existence and regularity result below.
\begin{theorem}\label{T2}
Let $\Omega \subset \mathbb{R}^{d}$ be a bounded Lipschitz domain and $S$ satisfy \eqref{A1}--\eqref{A2}. Then there is an $\epsilon$ depending on $\frac{C_2^{p'}}{C_1}$, $p,d,N$ and $\Omega$, such that for all $q\in [p-\epsilon,p]$ the following holds.
If $f\in L^q(\Omega; \mathbb{R}^{d\times N})$, then there exists $u\in W^{1,q}_0(\Omega; \mathbb{R}^N)$ which is a distributional solution to \eqref{EQ}.

Moreover, there is a constant $c$ depending on $C_1,C_2,p,q,d,N$ and $\Omega$, such that\footnote{Here $M$ is the Hardy Littlewood maximal operator that is defined in Section~2.}
\begin{align}
\label{apri2}
\int_{\Omega} \abs{\nabla u}^{p}(M(f+1))^{q-p}\dx \leq c\int_\Omega \abs{f}^q\dx+ c(C_3+1).
\end{align}
\end{theorem}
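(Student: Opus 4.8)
\emph{Plan of proof.} The real content is the a priori bound \eqref{apri2}; the existence assertion then follows by approximation and compactness, so we focus on \eqref{apri2}. First reduce to smooth data: let $f_\delta:=f*\rho_\delta$ (with $f$ extended by $0$) be standard mollifications, so $f_\delta$ is smooth and bounded, $\norm{f_\delta}_{L^q(\Omega)}\le\norm{f}_{L^q(\Omega)}$, $f_\delta\to f$ in $L^q$; and let $u_\delta\in W^{1,p}_0(\Omega;\mathbb{R}^N)$ be the Minty solution of \eqref{EQ} with datum $\divergence\abs{f_\delta}^{p-2}f_\delta\in\divergence L^\infty(\Omega;\mathbb{R}^{d\times N})$, which exists by monotone operator theory. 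It suffices to prove \eqref{apri2} for each $u_\delta$ with a constant independent of $\delta$: since $g\mapsto Mg$ is bounded and, by sublinearity, Lipschitz on $L^q$ for $q>1$, one has $M((\abs{f_\delta}+1)\mathds 1_\Omega)\to M((\abs{f}+1)\mathds 1_\Omega)$ in $L^q$ and, along a subsequence, a.e.; the compactness step below gives $\nabla u_\delta\to\nabla u$ a.e.; Fatou's lemma then transfers the estimate to $u$.

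Fix $\delta$, write $u:=u_\delta$, $f:=f_\delta$, and set $F:=(\abs{f}+1)\mathds 1_\Omega$ and $\omega:=(MF)^{q-p}$; since $MF\ge F\ge 1$ a.e.\ on $\Omega$ we have $0\le\omega\le1$ a.e.\ on $\Omega$. Two elementary features of $\omega$ steer the argument. \emph{(i) $\omega$ absorbs the data}: on $\{\abs f>1\}$, $MF\ge\abs f$ gives $\omega\le\abs f^{q-p}$, hence $\abs f^p\omega\le\abs f^q$, while everywhere $\omega\le1$, so
\[
\int_\Omega\big(\abs f^p+C_3+1\big)\,\omega\dx\le\int_\Omega\abs f^q\dx+c(C_3+1)\abs\Omega .
\]
\emph{(ii) $\omega$ is a Muckenhoupt weight of controlled type}: by the Coifman--Rochberg theorem $(MF)^s\in\mathcal A_1$ with $[(MF)^s]_{\mathcal A_1}\le c(d)$ for $0\le s\le\tfrac12$, and Jones factorization (through $\omega=((MF)^{(p-q)/(r-1)})^{1-r}$) shows $\omega\in\mathcal A_r$ for every $r>1+(p-q)$, with $[\omega]_{\mathcal A_r}\le c(d,p)$ once $r-1\ge2(p-q)$; the threshold $1+(p-q)$ is in general sharp, so the $\mathcal A_r$-window of $\omega$ opens towards $r=1$ only in the limit $q\to p$.

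The core step is the \emph{weighted energy estimate}
\[
\int_\Omega\abs{\nabla u}^p\,\omega\dx\le c\int_\Omega\big(\abs f^p+C_3+1\big)\,\omega\dx,\qquad c=c(C_1,C_2,p,q,d,N,\Omega),
\]
which combined with (i) gives \eqref{apri2}. To establish it one flattens $\partial\Omega$, passes to a finite covering, and argues on interior balls and boundary half-balls (using $u=0$ on $\partial\Omega$). The structural input is the self-improving higher integrability for $p$-Laplacian-type systems: from \eqref{A1}--\eqref{A2} a Caccioppoli inequality together with Sobolev--Poincar\'e yields a reverse-H\"older inequality
\[
\Big(\dashint_{B_r}\abs{\nabla u}^{p(1+\sigma)}\dx\Big)^{\frac1{1+\sigma}}\le c\dashint_{B_{2r}}\abs{\nabla u}^{p}\dx+c\Big(\dashint_{B_{2r}}\big(\abs f^p+C_3+1\big)^{1+\sigma}\dx\Big)^{\frac1{1+\sigma}},
\]
with $\sigma=\sigma(\tfrac{C_2^{p'}}{C_1},p,d,N)>0$. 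A Calder\'on--Zygmund stopping-time decomposition at each level $\lambda$, fed with this inequality on the selected cubes, produces a good-$\lambda$ inequality whose error terms decay like a fixed positive power of the relative Lebesgue measure; multiplying by the appropriate power of $\lambda$ and integrating \emph{against $\omega\dx$} turns those errors into $\omega$-measure errors which are absorbable, the resulting series converging precisely when $\omega\in\mathcal A_{1+\sigma}$. By (ii) this forces $p-q<\sigma$, and one takes $\epsilon$ below the corresponding threshold, which — like $\sigma$ — depends only on $\tfrac{C_2^{p'}}{C_1},p,d,N$ and $\Omega$. This weighted-Gehring iteration, run with a weight carrying a \emph{negative} power of a maximal function on the large side of the inequality, is the main obstacle; the balance it forces between $\sigma$ and $p-q$ is exactly what determines $\epsilon$.

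Finally the limit. From the weighted energy estimate, (i), H\"older with exponents $p/q$ and $p/(p-q)$, the identity $\omega^{-q/(p-q)}=(MF)^q$, and the maximal inequality,
\[
\int_\Omega\abs{\nabla u_\delta}^q\dx\le\Big(\int_\Omega\abs{\nabla u_\delta}^p\omega\dx\Big)^{q/p}\Big(\int_{\mathbb{R}^d}(MF)^q\dx\Big)^{\frac{p-q}{p}}\le c\big(\norm{f}_{L^q(\Omega)}^q+C_3+1\big),
\]
so along a subsequence $u_\delta\rightharpoonup u$ in $W^{1,q}_0(\Omega;\mathbb{R}^N)$, $\abs{f_\delta}^{p-2}f_\delta\to\abs f^{p-2}f$ in $L^{q/(p-1)}$, and $S(\cdot,\nabla u_\delta)$ is bounded in $L^{q/(p-1)}(\Omega;\mathbb{R}^{d\times N})$ (note $q/(p-1)>1$ for $q$ near $p$). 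As the weak limit $u$ is in general not an admissible test function for the limit equation, one identifies $S(\cdot,\nabla u)$ by means of the div--curl--biting lemma: testing the $\delta$-equations with $u_\delta$ shows $\big(S(\cdot,\nabla u_\delta)-\abs{f_\delta}^{p-2}f_\delta\big)\cdot\nabla u_\delta$ converges, in the biting sense, to $\big(S(\cdot,\nabla u)-\abs f^{p-2}f\big)\cdot\nabla u$; inserting this into Minty's monotonicity trick on the sets retained by the biting lemma, and exhausting $\Omega$, forces $\nabla u_\delta\to\nabla u$ a.e. Hence $S(\cdot,\nabla u_\delta)\rightharpoonup S(\cdot,\nabla u)$, $u$ solves \eqref{EQ} distributionally, and \eqref{apri2} for $u$ follows from the estimate for $u_\delta$ by Fatou's lemma.
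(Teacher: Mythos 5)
You have correctly isolated the key estimate — the weighted energy bound $\int_\Omega|\nabla u|^p\omega\,\dx\le c\int_\Omega(|f|^p+C_3+1)\,\omega\,\dx$ with $\omega=(M(|f|+1))^{q-p}$, which is exactly the paper's inequality \eqref{apriori} — and your reduction of \eqref{apri2} to it, as well as the choice of limit machinery (biting div--curl plus Minty), match the paper. The genuine gap is in your proof of that weighted estimate. A Caccioppoli inequality plus Sobolev--Poincar\'e gives a reverse H\"older inequality, hence a good-$\lambda$ inequality with a \emph{fixed} decay constant $\theta<1$; this is the Gehring mechanism and it only moves integrability \emph{upward} (to $L^{p(1+\sigma)}$). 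To run a weighted good-$\lambda$ argument for a degenerate weight $\omega\in\mathcal A_r$ you must beat the factor $A^{p}$ coming from the dilation of levels after applying the $\mathcal A_\infty$ comparison $\omega(E)/\omega(Q)\le C(|E|/|Q|)^{\eta}$, i.e.\ you need $C\theta^{\eta}A^{p}<1$, which requires $\theta$ to be made \emph{arbitrarily small}. That smallness is obtained in the literature only from comparison estimates with a homogeneous problem having Lipschitz/$C^{1,\alpha}$ gradient bounds, which demands Uhlenbeck structure or at least continuity of $S$ in $z$ — none of which is available under the bare growth and coercivity assumptions \eqref{A1}--\eqref{A2} used here (the Serrin and Ne\v{c}as examples cited in the introduction show there is no such interior regularity to compare with). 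In addition, your reverse H\"older inequality carries the data term $\bigl(\dashint(|f|^p+\dots)^{1+\sigma}\bigr)^{1/(1+\sigma)}$, i.e.\ it consumes $f\in L^{p(1+\sigma)}_{\loc}$, which is the opposite of the regime $f\in L^q$, $q\le p$, and the corresponding constants blow up as your mollification parameter $\delta\to0$. The paper avoids all of this by a purely duality-type argument: it builds a ``relative truncation'' $u_{\Ocal(\lambda)}$ of $u$ on the super-level sets $\Ocal(\lambda)=\{Mg>\lambda\}$ of the maximal function of the data, tests the equation with it, and then multiplies by $\lambda^{-1-\varepsilon}$ and integrates in $\lambda$, Fubini converting the level-set identities directly into the weighted estimate; no Caccioppoli, Gehring, or comparison estimate enters.

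A secondary point: in the limit passage you assert that the biting div--curl lemma and Minty's trick ``force $\nabla u_\delta\to\nabla u$ a.e.'' Under the non-strict monotonicity \eqref{A3} this is not justified (take $S$ constant in some directions); the correct conclusion, and the one the paper draws, is only the identification $\overline S=S(\cdot,\nabla u)$ of the weak limit of $S(\cdot,\nabla u_\delta)$ in the weighted space. Consequently the transfer of \eqref{apri2} to the limit should go through weak lower semicontinuity of the weighted norm, not Fatou along an a.e.\ convergent sequence of gradients.
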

Let us point out that the above estimate measures $\nabla u$ more accurate in relation to the right hand side. For once we find by \eqref{est1} that \eqref{apri2} implies \eqref{apri1}. But it also implies for instance that non $p$-integrable singularities of $\nabla u$, can only appear in areas, where $f$ is large, quantified by the naturally related weight. We therefore believe, that estimates of the above type will be of increasing importance in the framework of the existence theory in many applications. Moreover, we wish to indicate its potential for numerical analysis, especial its use for adaptive schemes.

After little restrictions the method can easily be applied on unbounded domains.
\begin{corollary}
\label{C1}
Let $\Omega \subset \mathbb{R}^{d}$ be a Lipschitz domain and $S$ satisfy \eqref{A1}--\eqref{A2}, with $C_3=0$. Then there is an $\epsilon$ depending on $\frac{C_2^{p'}}{C_1}$, $p,d,N$ and $\Omega$, such that for all $q\in [p-\epsilon,p]$ the following holds.
If $f\in L^q(\Omega; \mathbb{R}^{d\times N})$, then there exists $u\in D^{1,q}_0(\Omega; \mathbb{R}^N)$\footnote{$D^{1,q}_0(\Omega; \mathbb{R}^N):= \set{ u\in L^1_{\text{loc}}(\omega;\setR^N): \exists u^j\in C^\infty_0(\Omega;\setR^N)\text{ s.t. }\nabla u^j\to\nabla u\text{ in } L^q(\Omega;\setR^d)}.$} which is a distributional solution to \eqref{EQ}.

Moreover, there is a constant $c$ only depending on $C_1,C_2,p,q,d,N$ and $\Omega$, such that
\begin{align}
\label{apri3}
\int_{\Omega}\abs{\nabla u}^q + \abs{\nabla u}^{p}(Mf)^{q-p}\dx \leq c\int_\Omega \abs{f}^q\dx.
\end{align}
\end{corollary}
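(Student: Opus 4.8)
The plan is to deduce the corollary from Theorem~\ref{T2} by exhausting $\Omega$ with bounded Lipschitz subdomains and passing to the limit. First I would fix an increasing sequence of bounded Lipschitz domains $\Omega_k$ with $\bigcup_k\Omega_k=\Omega$ whose Lipschitz characters are bounded uniformly in $k$ in terms of that of $\Omega$; such an exhaustion is elementary, e.g.\ one may take $\Omega_k=\Omega\cap B_{r_k}$ for a sequence of radii $r_k\to\infty$ chosen so that $\partial B_{r_k}$ meets $\partial\Omega$ transversally. On each $\Omega_k$, Theorem~\ref{T2} with $C_3=0$ and right hand side $f|_{\Omega_k}$ provides a distributional solution $u_k\in W^{1,q}_0(\Omega_k;\RN)$ of \eqref{EQ}, which I extend by $0$ to an element of $D^{1,q}_0(\Omega;\RN)$. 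The proof of Theorem~\ref{T2}, specialised to $C_3=0$, in fact yields the scale invariant estimate
\begin{equation*}
\int_{\Omega_k}\abs{\nabla u_k}^q+\abs{\nabla u_k}^{p}\bigl(M(f|_{\Omega_k})\bigr)^{q-p}\dx\ \leq\ c\int_{\Omega_k}\abs{f}^q\dx\ \leq\ c\int_{\Omega}\abs{f}^q\dx ,
\end{equation*}
with $c$ \emph{independent of $k$}: the additive constant and the regularisation $M(f+1)$ in \eqref{apri2} serve only to absorb the term $C_3$ (the rest of the estimate is $q$-homogeneous in $(\nabla u,f)$), and by the dilation invariance of \eqref{EQ} the constant depends on $\Omega_k$ only through its uniformly bounded Lipschitz character. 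Since $q-p\leq 0$ and $M(f|_{\Omega_k})\leq Mf$, the left hand side controls $\int_{\Omega_k}\abs{\nabla u_k}^{p}(Mf)^{q-p}\dx$ as well.

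Next I would extract limits. As $(\nabla u_k)$ is bounded in $L^q(\Omega;\RdN)$ and, by \eqref{A2}, $(S(\cdot,\nabla u_k))$ is bounded in $L^{q/(p-1)}(\Omega;\RdN)$ (after possibly decreasing $\epsilon$ we may assume $1<q$ and $p-1\leq q$, so that $q/(p-1)\geq 1$), a subsequence satisfies $\nabla u_k\rightharpoonup\nabla u$ in $L^q$ for some $u\in D^{1,q}_0(\Omega;\RN)$ — the set of admissible gradients being closed in $L^q$, hence weakly closed — and $S(\cdot,\nabla u_k)\rightharpoonup\chi$ in $L^{q/(p-1)}$. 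For every $\varphi\in C^\infty_0(\Omega;\RN)$ one has $\operatorname{supp}\varphi\subset\Omega_k$ for $k$ large, so testing the equation for $u_k$ against $\varphi$ and letting $k\to\infty$ gives $\int_\Omega\chi\cdot\nabla\varphi\dx=\int_\Omega\abs{f}^{p-2}f\cdot\nabla\varphi\dx$. Granting the identification $\chi=S(\cdot,\nabla u)$ a.e.\ in $\Omega$, $u$ is then a distributional solution of \eqref{EQ}; and \eqref{apri3} follows from the displayed bound by weak lower semicontinuity of the convex functionals $v\mapsto\int_\Omega\abs v^q\dx$ and $v\mapsto\int_\Omega\abs v^{p}(Mf)^{q-p}\dx$ (on $a.e.$ convergent convex combinations obtained from Mazur's lemma one may also invoke Fatou) together with $\int_{\Omega_k}\abs f^q\dx\nearrow\int_\Omega\abs f^q\dx$.

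The only genuinely delicate point, and the main obstacle, is the identification of $\chi$ with $S(\cdot,\nabla u)$, since the natural test function $u$ is not admissible in the very weak regime. I expect to handle it exactly as in the proof of Theorem~\ref{T2}: the uniform weighted bound makes $(\nabla u_k)$ equi-integrable in the weighted sense, so the div--curl--biting lemma applies to the pair $\bigl(S(\cdot,\nabla u_k),\nabla u_k\bigr)$ and yields, on an increasing family of subsets of full measure, convergence of $S(\cdot,\nabla u_k)\cdot\nabla u_k$ to $\chi\cdot\nabla u$ in the biting sense; the monotonicity \eqref{A3} together with Minty's monotonicity trick then forces $\chi=S(\cdot,\nabla u)$ almost everywhere. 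In short, no new analytic difficulty arises beyond Theorem~\ref{T2}: the additional work is the geometric bookkeeping making the constants on the exhausting family uniform in $k$, and the observation that, with $C_3=0$, the weighted estimate is scale invariant so that the limiting weight $(Mf)^{q-p}$ is the right one.
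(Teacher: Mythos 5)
Your strategy is sound but genuinely different from the paper's. The paper does \emph{not} exhaust $\Omega$ by bounded subdomains: it observes that the relative truncation and the weighted a priori estimate of Section~4 are carried out on all of $\setR^d$ (the covering is of the level set $\Ocal(\lambda)$, not of $\Omega$) and hence work verbatim on an unbounded Lipschitz domain with $h=\abs{f}$ in place of $\abs{f}+1$, which immediately yields \eqref{niceest}; the approximation is then by amplitude truncation of $f$ on the full domain, exactly as in Theorem~\ref{T2}, and the only adjustment is in the identification step. Your route instead applies Theorem~\ref{T2} as a black box on $\Omega_k=\Omega\cap B_{r_k}$, which buys you the ability to cite the theorem rather than reopen its proof, but at the cost of the uniformity question you only gesture at: the constants $c(\Omega,p)$ and the threshold $\epsilon_0$ in \eqref{epsilon} enter through the boundary \Poincare{} inequality $\abs{\tfrac32Q_i}\le c(\Omega)\abs{\tfrac32Q_i\cap\Omega^c}$, so you must actually verify that the Lipschitz characters of $\Omega\cap B_{r_k}$ are uniformly controlled (transversality of $\partial B_{r_k}$ with $\partial\Omega$ is not automatic for a prescribed sequence $r_k\to\infty$ and must be arranged), and that $\epsilon$ can be fixed independently of $k$ \emph{before} choosing $q$. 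This is routine but is real work that the paper's route avoids entirely.

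The one point where your write-up has an actual error rather than an omission is the identification step: you propose to "handle it exactly as in the proof of Theorem~\ref{T2}", but Theorem~\ref{T5} is stated only for bounded open sets, and your pair $(a^k,b^k)=(\nabla u_k, S(\cdot,\nabla u_k))$ lives on the unbounded $\Omega$. This is precisely the point the paper's proof singles out: one must apply the biting div--curl lemma on an arbitrary bounded open subset $\Omega'\subset\Omega$ (for which $\Omega'\subset\Omega_k$ for large $k$, so test functions $c^k\in W^{1,\infty}_0(\Omega')$ are admissible in the equations for $u_k$), conclude $\overline S=S(\cdot,\nabla u)$ a.e.\ on $\Omega'$, and then let $\Omega'$ exhaust $\Omega$. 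With that localization inserted, and the uniformity of constants over the exhaustion established, your argument closes.
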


The structure of the paper is as follows. We introduce the necessary notation in the preliminary below. In Section~\ref{s1} we introduce a truncation method of Sobolev functions, relative to an open set, which is the analytical highlight of this article. In Section~\ref{s2} we deduce the a-priory estimates and in Section~\ref{s3} we prove the existence.

 %
\section{Preliminary}
Throughout
the paper all cubes will have sides parallel to the axes. By $c,C$ we denote a generic constant, i.e. its value may change at every appearance. Its dependencies are either stated in the results or are indicated in  $C(\dots)$.
Let us recall the definition of the Hardy Littlewood maximal function. For any $f\in L^1_{\loc}(\setR^n)$ we define
\[
Mf(x):=\sup_\set{Q\text{ a cube }:\,x\in Q}\mean{\abs{f}}_Q \quad \textrm{with} \quad \mean{\abs{f}}_Q:=\dashint_{Q}\abs{f(y)}\dy:=\frac{1}{|Q|}\int_{Q}\abs{f(y)}\dy.
\]
 It is standard, that the operator is sub linear and continuous from $L^s(\Omega)\to L^s(\Omega)$, for $s\in (1,\infty]$. 
Further,  we say that $\omega:\setR^d \to \setR$ is a weight function if it is a measurable function that is almost everywhere finite and positive. For such a weight and arbitrary measurable $\Omega\subset \setR^d$ we denote the space $L^p_{\omega}(\Omega;\setR^N)$ with $p\in [1,\infty)$ as
$$
L^p_{\omega}(\Omega,\setR^d):=\biggset{f:\Omega\to\setR^N,\text{ measurable } : \; \norm{f}_{L^p_\omega}
:= \bigg(\int_{\Omega} |f(x)|^p\omega(x)\dx\bigg)^{\frac 1p} <\infty}.
$$
We introduce the weighted Sobolev spaces, as 
\begin{align*}
{W}^{1,q}_{\omega}(\Omega;\setR^N)&:=\set{u\in W^{1,1}(\Omega;\setR^N):\nabla u\in L^q_\omega(\Omega;\setR^N)}\\
{W}^{1,q}_{0,\omega}(\Omega;\setR^N)&:=W^{1,1}_0(\Omega;\setR^N)\cap {W}^{1,q}_{\omega}(\Omega;\setR^N).
\end{align*}
 Next, for  $p\in [1,\infty)$, we say that a weight $\omega$
belongs to the Muckenhoupt class $\Acal_p$ if and only if
there exists a positive constant $A$ such that for every cube $Q
\subset \setR^n$ the following holds
\begin{alignat}{2}
  \label{defAp2}
  \bigg(\dashint_Q
    \omega\dx\bigg)\bigg(\dashint_Q\omega^{-(p'-1)}\dx\bigg)^\frac{1}{p'-1}
  &\le A & \qquad\qquad&\text{if $p \in (1,\infty)$},
  \\
  \label{defA1}
  M\omega(x)&\le A\, \omega(x) &&\text{if $p=1$}.
\end{alignat}
In what follows, we denote by
$A_p(\omega)$ the smallest constant $A$ for which the
inequality~\eqref{defAp2}, resp.~\eqref{defA1}, holds.

The next result makes a very useful link between the maximal operator and  $\Acal_p$-weights.
\begin{lemma}[See pages 229--230 in \cite{86:_real} and page 5 in \cite{Tur00}]\label{cor:dual}
Let $f \in L^1_{\loc}(\setR^n)$ be such that $Mf<\infty$ almost everywhere in $\setR^n$. Then for all $\alpha \in (0,1)$ we have $(Mf)^{\alpha} \in \Acal_1$. Furthermore, for all $p\in (1,\infty)$ and all $\alpha\in (0,1)$ there holds
$(Mf)^{-\alpha(p-1)}\in \Acal_p$.
\end{lemma}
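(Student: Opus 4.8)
The plan is to reduce the whole statement to the first assertion --- the Coifman--Rochberg fact that $(Mf)^\alpha\in\Acal_1$ for every $\alpha\in(0,1)$ --- and to deduce the $\Acal_p$ part from it by a soft duality argument. Granting $w:=(Mf)^\alpha\in\Acal_1$, I would use the two standard facts that $\Acal_1\subset\Acal_s$ for all $s\in(1,\infty)$ and that $\sigma\in\Acal_p$ if and only if $\sigma^{1-p'}\in\Acal_{p'}$. Applying the latter to $\sigma:=(Mf)^{-\alpha(p-1)}$ and using $1-p'=-(p'-1)$ together with $(p-1)(p'-1)=1$, one computes $\sigma^{1-p'}=(Mf)^{\alpha}=w\in\Acal_1\subset\Acal_{p'}$, hence $\sigma\in\Acal_p$. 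This part is routine, so the real content is the $\Acal_1$ claim, which moreover already contains the $p=1$ reading of the statement.

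For $(Mf)^\alpha\in\Acal_1$ I would verify the equivalent averaged form: there is $A=A(n,\alpha)$ with $\dashint_Q (Mf)^\alpha\,\dx\le A\operatorname*{ess\,inf}_Q (Mf)^\alpha$ for every cube $Q$. I would fix $Q$, split $f=f\chi_{2Q}+f\chi_{\setR^n\setminus 2Q}=:f_1+f_2$, and use sublinearity of $M$ with $(a+b)^\alpha\le a^\alpha+b^\alpha$ to bound $\dashint_Q(Mf)^\alpha\,\dx$ by $\dashint_Q(Mf_1)^\alpha\,\dx+\dashint_Q(Mf_2)^\alpha\,\dx$. For the local term I would invoke Kolmogorov's inequality together with the weak $(1,1)$ estimate for $M$, which yield $\dashint_Q (Mf_1)^\alpha\,\dx\le C(n,\alpha)\big(\dashint_{2Q}\abs{f}\,\dy\big)^\alpha$; since $2Q$ is a cube containing every point of $Q$, the right-hand side is bounded by $C(n,\alpha)\operatorname*{ess\,inf}_Q (Mf)^\alpha$. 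For the nonlocal term I would show $Mf_2$ is essentially constant on $Q$: any cube $R\ni x$ contributing to $Mf_2(x)$ for $x\in Q$ must meet $\setR^n\setminus 2Q$, hence has side length bounded below by a dimensional multiple of that of $Q$, so it is contained in a fixed dilate of $Q$ that also contains any prescribed $x'\in Q$; this gives $\dashint_R\abs{f_2}\,\dy\le C(n)Mf(x')$, and passing to the supremum over $R$ and the infimum over $x'$ yields $Mf_2(x)\le C(n)\operatorname*{ess\,inf}_Q Mf$ for a.e.\ $x\in Q$. Combining the two bounds gives $\dashint_Q(Mf)^\alpha\,\dx\le A\operatorname*{ess\,inf}_Q(Mf)^\alpha$, which is the $\Acal_1$ condition.

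The main obstacle is the near-constancy of $Mf_2$ on $Q$: one must argue carefully that only cubes ``essentially not smaller than $Q$'' contribute to $Mf_2(x)$ when $x\in Q$, and that each such cube can --- at the cost of a purely dimensional dilation --- be replaced by one containing an arbitrarily prescribed competing point of $Q$. The Kolmogorov estimate and the exponent-duality step are standard; the one extra point worth recording is that the hypothesis $Mf<\infty$ a.e.\ is precisely what makes $(Mf)^\alpha$ a legitimate weight (a.e.\ finite and positive, once $f\not\equiv 0$), so that the $\Acal_p$ conditions and their equivalent averaged forms are not vacuous.
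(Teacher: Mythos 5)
Your argument is correct. The paper itself gives no proof of this lemma --- it is quoted from the cited references (Stein's book and Turesson's notes) --- and what you propose is precisely the standard Coifman--Rochberg argument found there: the splitting $f=f\chi_{2Q}+f\chi_{\setR^n\setminus 2Q}$, Kolmogorov's inequality plus the weak $(1,1)$ bound for the local piece (which is exactly where $\alpha<1$ enters), the near-constancy of $Mf_2$ on $Q$ for the far piece, and the reduction of the $\Acal_p$ statement to the $\Acal_1$ statement via the duality $\sigma\in\Acal_p\Leftrightarrow\sigma^{1-p'}\in\Acal_{p'}$ together with $(p-1)(p'-1)=1$ and $\Acal_1\subset\Acal_{p'}$. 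All steps are sound, including the geometric observation that any cube meeting both $Q$ and $\setR^n\setminus 2Q$ has side length comparable to (or larger than) that of $Q$ and hence a fixed dimensional dilate of it contains all of $Q$.
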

%

\section{Relative Truncation}
\label{s1}
In this section we introduce a relative truncation. It is strongly influenced by the Lipschitz truncation method, developed in \cite{DieRuzWol10} see also~\cite{BreDieSch13,Die13paseky} where the concept was refined in the direction which can be adapted here. In contrast to the Lipschitz truncation, the {\em relative truncation} smoothens the function relative to a different independent function, or more explicit, it truncates the gradient on any given open set. 

The first step is a suitable covering.
%
We take the covering introduced in~\cite[Proposition 3.17]{DieFro10} and proof some more properties needed for our situation.
\begin{proposition}
\label{cover}
 Let $\Ocal$ be an open subset of $\Rd$, with $\Ocal\neq\setR^d$.
Then there exists a countable family $Q_i$ of closed, dyadic cubes such that
\begin{enumerate}[label=(\alph*)]
\item \label{c1}$\bigcup_i Q_i=\Ocal$ and all cubes $Q_i$ have disjoint interiors.
\item \label{c2}$\diam(Q_i)<\text{dist}(Q_i,\Ocal^c)\leq 4\diam(Q_i)$
\item \label{c3} If $Q_i\cap Q_j\neq \emptyset$, then $\diam(Q_i)\leq 2\diam(Q_j)\leq 4\diam(Q_i)$.
\item \label{c4} For given $Q_i$, there exists at most $4^d-2^d$ cubes $Q_j$ 
touching $Q_i$ (boundaries intersect but not the interiors), we define $A_i$ as the index set of all neighboring cubes of $Q_i$.
\item \label{c5} The family of cubes $\set{\frac{3}{2}Q_i}_{i\in\setN}$ has finite intersection. The family can be split in $4^d-2^d$ pairwise disjoint subfamilies.
\item \label{c6}There is a partition of unity, $\psi_i\in C^{0,1}(\frac98Q_i)$, such that $\chi_{\frac12Q_i}\leq \psi_i\leq \chi_{\frac98Q_i}$ and
$\diam(Q_i)\abs{\nabla \psi_i}\leq c(d)$ uniformly.
\end{enumerate}
\end{proposition}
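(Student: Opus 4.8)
The plan is to construct the family $\{Q_i\}$ via a Whitney-type decomposition of $\Ocal$ and then verify each property in turn. First I would recall the standard Whitney decomposition: tile $\setR^d$ by the dyadic mesh at each scale $2^{-k}$, and for each point $x\in\Ocal$ select the dyadic cube $Q$ containing $x$ of the largest side length $2^{-k}$ satisfying $\diam(Q)\le \operatorname{dist}(Q,\Ocal^c)$ (such a maximal scale exists because $\operatorname{dist}(x,\Ocal^c)>0$ and cubes at very small scales automatically satisfy the inequality while sufficiently large cubes do not, since $\Ocal\neq\setR^d$). Passing to the maximal such cubes and discarding those strictly contained in another gives a countable family with pairwise disjoint interiors whose union is $\Ocal$; this yields \ref{c1}. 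For \ref{c2}, the choice forces $\diam(Q_i)\le\operatorname{dist}(Q_i,\Ocal^c)$, and maximality (the dyadic parent $\widehat{Q_i}$ fails the defining inequality) gives $\operatorname{dist}(\widehat{Q_i},\Ocal^c)<\diam(\widehat{Q_i})=2\diam(Q_i)$; combining with $\operatorname{dist}(Q_i,\Ocal^c)\le\operatorname{dist}(\widehat{Q_i},\Ocal^c)+\diam(Q_i)$ (any point of $Q_i$ lies within $\diam(Q_i)$ of a point of $\widehat{Q_i}$, hmm actually one uses $\operatorname{dist}(Q_i,\Ocal^c)\le \diam(\widehat{Q_i})+\operatorname{dist}(\widehat{Q_i},\Ocal^c)$) one obtains the upper bound $\operatorname{dist}(Q_i,\Ocal^c)\le 4\diam(Q_i)$. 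I should be slightly careful here with which inequality I invoke, but the constant $4$ is the standard Whitney constant and only the shape of the argument matters.

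Next, for \ref{c3}, if two Whitney cubes touch then by \ref{c2} their distances to $\Ocal^c$ are comparable up to their diameters; writing $\operatorname{dist}(Q_j,\Ocal^c)\le\operatorname{dist}(Q_i,\Ocal^c)+\diam(Q_i)$ when $Q_i\cap Q_j\neq\emptyset$ and chaining the two inequalities in \ref{c2} forces $\diam(Q_j)\le 4\diam(Q_i)$ and vice versa; since diameters are dyadic (powers of $2$ times $\sqrt d$), the ratio is either $1$, $2$ or $\tfrac12$, which is exactly \ref{c3}. Property \ref{c4} is then a counting statement: a dyadic cube of side $2^{-k}$ can only touch dyadic cubes of side $2^{-k}$, $2^{-k+1}$ or $2^{-k-1}$, and among cubes of the same or larger or smaller side meeting its closed boundary the maximum number is $4^d-2^d$ (the cubes of the refined mesh of side $2^{-k-1}$ lying in the closed $3\times 3\times\cdots$ block around $Q_i$, minus the $2^d$ interior ones); I would present this as a direct combinatorial count on the dyadic grid. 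For \ref{c5}, the dilated cubes $\tfrac32 Q_i$ are still contained in a fixed neighbourhood controlled by \ref{c2}, so any point lies in at most $4^d-2^d$ of them by \ref{c4} and the comparability of sizes of overlapping cubes; the splitting into $4^d-2^d$ disjoint subfamilies is a greedy colouring argument (a graph with bounded degree is properly colourable with a bounded number of colours).

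Finally, for \ref{c6} I would take bump functions: fix a fixed smooth (or Lipschitz) $\eta$ with $\chi_{[-1/2,1/2]^d}\le\eta\le\chi_{[-9/16,9/16]^d}$ — more precisely choose the reference profile so that after rescaling $Q_i$ to the unit cube the support sits in $\tfrac98 Q_i$ — set $\varphi_i(x):=\eta\big(\tfrac{x-x_i}{\ell_i}\big)$ where $x_i,\ell_i$ are the centre and side of $Q_i$, so that $\chi_{\frac12 Q_i}\le\varphi_i\le\chi_{\frac98 Q_i}$ and $\ell_i|\nabla\varphi_i|\le c(d)$, then normalise $\psi_i:=\varphi_i/\sum_j\varphi_j$. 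The sum $\sum_j\varphi_j\ge 1$ on $\Ocal$ since the $\tfrac12 Q_j$ already cover $\Ocal$ by \ref{c1}, and it is locally a sum of at most $4^d-2^d$ terms by \ref{c5}, so $\psi_i$ is Lipschitz with the stated gradient bound once one uses that all cubes overlapping $Q_i$ have comparable size by \ref{c3}. The main obstacle — really the only subtle point — is bookkeeping the Whitney constants consistently so that \ref{c2}'s upper bound $4\diam(Q_i)$, the factor $2$ in \ref{c3}, and the enlargements $\tfrac98$ and $\tfrac32$ all fit together; everything else is routine once the decomposition is set up. Since the construction and properties \ref{c1}--\ref{c2} are quoted from \cite[Proposition 3.17]{DieFro10}, in practice I would cite that for \ref{c1}--\ref{c2} and only spell out the arguments for \ref{c3}--\ref{c6}.
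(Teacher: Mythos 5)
Your overall route---a Whitney decomposition of $\Ocal$ followed by normalized bump functions---is the same as the paper's, which simply quotes \cite[Proposition 3.17]{DieFro10} for (a)--(d) and only verifies (e) and (f). Two steps as written, however, do not deliver what you claim. First, your chaining argument for (c) (combining the two inequalities of (b) with $\operatorname{dist}(Q_j,\Ocal^c)\le\operatorname{dist}(Q_i,\Ocal^c)+\diam(Q_i)$ for touching cubes) only yields $\diam(Q_j)\le 4\diam(Q_i)$ and conversely, i.e.\ a dyadic ratio in $\{\tfrac14,\tfrac12,1,2,4\}$; it does not exclude the ratios $4$ and $\tfrac14$, so it does not give the stated factor $2$. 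The sharper constant (which is also what makes the count $4^d-2^d$ in (d) come out) needs the finer properties of the specific construction in \cite{DieFro10}, not just the two-sided bound in (b); alternatively one must accept weaker dimensional constants throughout, which is harmless for the rest of the paper but is not the statement as given.

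Second, in the partition of unity you justify $\sum_j\varphi_j\ge 1$ on $\Ocal$ by asserting that the half-cubes $\tfrac12 Q_j$ cover $\Ocal$. They do not: (a) says only that the $Q_j$ cover $\Ocal$, and the sets $\tfrac12Q_j$ leave gaps near the boundaries of the $Q_j$. The repair is standard and cheap: either choose the profile so that $\chi_{Q_i}\le\varphi_i\le\chi_{\frac98Q_i}$, whence $\sum_j\varphi_j\ge 1$ on $\Ocal=\bigcup_jQ_j$, while $\psi_i=1$ on $\tfrac12Q_i$ still follows because $\tfrac98Q_j\cap\tfrac12Q_i=\emptyset$ for $j\ne i$ (the observation the paper uses); or keep your profile and note only that $\varphi_i\ge c_0(d)>0$ on all of the closed cube $Q_i$, which is all the normalization $\psi_i=\varphi_i/\sum_j\varphi_j$ requires. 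With these two repairs your argument is complete and coincides in substance with the paper's.
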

\begin{proof}
Since \ref{c1}--\ref{c4} are stated in~\cite[Proposition 3.17]{DieFro10} we just proof the last two.
First we show that the cubes $\frac{3}{2} Q_i$ have finite intersection. Firstly, by \ref{c2}, we find that $\frac{3}{2} Q_i\subset \Ocal$, which implies bu \ref{c1}, that $\bigcup_i \frac32Q_i=\bigcup_i Q_i$. Secondly, we find by \ref{c3} that $\frac{3}{2} Q_j$ only intersects its neighbors which implies by \ref{c4} that each $x\in Q_i$ is at most covered by $4^d-2^d$ cubes of the family $\frac{3}{2} Q_j$. Moreover, by \ref{c3} we find that the $\frac{9}{8} Q_i$ does not intersect with $\frac12Q_j$, for all $j\neq i$.
Therefore it is standard to construct a partition of unity as requested.
\end{proof}
Next we introduce the {\em relative truncation} of $u\in W^{1,p}_{0}(\Omega;\setR^N)$, with respect to $\Ocal$ an open proper set. Since we do not assume, that $\Ocal\subset \Omega$ we extend $u$ by 0 outside $\Omega$ and define
$$
u_{\Ocal}(x):=\left\{\begin{aligned}
&u(x) &&x\in \mathbb{R}^d \setminus \Ocal,\\
&\sum_i \psi_i \bar{u}_i &&x\in \Ocal.
\end{aligned}
\right.
$$
Where the $\bar{u}_i$ are defined via the covering constructed in Proposition~\ref{cover}:
$$
\bar{u}_i:=\left\{
\begin{aligned}
&\dashint_{\frac98Q_i} u \dx &&\textrm{if } \frac98Q_i \subset \Omega,\\
&0 &&\textrm{otherwise.}
\end{aligned}
 \right.
$$
At first we have to proof the following lemma, which is essential to show that the {\em relative truncation} is stable in Sobolev function spaces.
\begin{lemma}
Let $\Omega$ be a Lipschitz domain and let $u\in W^{1,p}_{0}(\Omega;\setR^N)$, for $p\in [1,\infty)$. Then for any $Q_i,Q_j$ which are members of the covering introduced in Proposition~\ref{cover} with non-empty intersection, we have
\begin{align}
\label{c7}
\frac{\abs{\bar{u}_j-\bar{u}_i}}{\diam(Q_i)}\leq c(d,\Omega)\dashint_{\frac32 Q_i}\abs{\nabla u}\dx+c(d,\Omega)\dashint_{\frac32 Q_j}\abs{\nabla u}\dx
\\
\label{c8}
\dashint_{\frac98Q_i}\Bigabs{\frac{\bar{u}-\bar{u}_i}{\diam(Q_i)}}^p\leq c(d,p,\Omega)\dashint_{\frac32 Q_i}\abs{\nabla u}^p\dx
\end{align}
\end{lemma}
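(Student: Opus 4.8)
The plan is to establish \eqref{c7} first, by comparing the averages $\bar u_i$ and $\bar u_j$ along a short chain of cubes, and then to deduce \eqref{c8} from \eqref{c7} by means of the partition of unity $\psi_k$ from Proposition~\ref{cover}\,\ref{c6}; in \eqref{c8} I read $\bar u$ as the relative truncation $u_{\Ocal}$ itself. Two facts from Proposition~\ref{cover} will be used repeatedly. First, by \ref{c3}, $Q_i\cap Q_j\neq\emptyset$ forces $\tfrac12\diam Q_i\le\diam Q_j\le 2\diam Q_i$; hence $\tfrac98 Q_i\cap\tfrac98 Q_j$ (and $\tfrac32 Q_i\cap\tfrac32 Q_j$) is nonempty and of measure comparable to $\abs{Q_i}$, and each dilate $\tfrac32 Q_j$ with $Q_j$ a neighbour of $Q_i$ is contained in a fixed dimensional multiple of $Q_i$ of comparable measure. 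Second, by \ref{c2}, $\tfrac32 Q_i\subset\Ocal$, so on $\tfrac98 Q_i$ we have $u_{\Ocal}=\sum_k\psi_k\bar u_k$, and since $\sum_k\psi_k\equiv1$ on $\Ocal$ this yields $u_{\Ocal}-\bar u_i=\sum_k\psi_k(\bar u_k-\bar u_i)$ on $\tfrac98 Q_i$ in all cases.

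\emph{Step 1: proof of \eqref{c7}.} Fix $Q_i,Q_j$ with $Q_i\cap Q_j\neq\emptyset$ and set $R:=\tfrac98 Q_i\cap\tfrac98 Q_j$. If $\tfrac98 Q_i\subset\Omega$ and $\tfrac98 Q_j\subset\Omega$, then $\bar u_i=\dashint_{\frac98 Q_i}u$ and $\bar u_j=\dashint_{\frac98 Q_j}u$; I would bound $\abs{\bar u_i-\bar u_j}\le\bigabs{\bar u_i-\dashint_R u}+\bigabs{\dashint_R u-\bar u_j}$, estimating each jump by passing from the mean over $R$ to the mean over the relevant $\tfrac98$–cube (a dimensional factor, since $\abs R\sim\abs{Q_i}\sim\abs{Q_j}$) and then applying the Poincaré inequality on that cube; using $\tfrac98 Q_i\subset\tfrac32 Q_i$ and $\diam Q_j\le2\diam Q_i$ this gives \eqref{c7} with a constant $c(d)$. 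If instead $\tfrac98 Q_j\not\subset\Omega$ (the case $\tfrac98 Q_i\not\subset\Omega$ being symmetric, and the case of both failing being trivial since then the left side of \eqref{c7} vanishes), then $\bar u_j=0$ and $\tfrac98 Q_j$ meets $\Omega^c$. I would then produce a ball $B\subset\tfrac32 Q_j$ of radius comparable to $\diam Q_j$ with $\abs{B\cap\Omega^c}\ge c(\Omega)\abs{B}$ — invoking the Lipschitz character of $\Omega$ at scales below its Lipschitz radius, and the Poincaré inequality on $W^{1,1}_0(\Omega)$ at larger scales — and, since $u=0$ a.e.\ on $\Omega^c$, apply the Poincaré inequality for functions vanishing on a subset of positive proportion to get $\bigabs{\dashint_{\frac98 Q_j}u}\le c(d,\Omega)\diam(Q_j)\dashint_{\frac32 Q_j}\abs{\nabla u}\dx$. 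Combining this with the chaining bound for $\bigabs{\bar u_i-\dashint_{\frac98 Q_j}u}$ as above yields \eqref{c7} with constant $c(d,\Omega)$.

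\emph{Step 2: proof of \eqref{c8}.} On $\tfrac98 Q_i$ write $u_{\Ocal}-\bar u_i=\sum_k\psi_k(\bar u_k-\bar u_i)$. By \ref{c3}--\ref{c5} the only indices that contribute are $k=i$ and the at most $4^d-2^d$ neighbours $k\in A_i$, for which $\diam Q_k$ is comparable to $\diam Q_i$; since $0\le\psi_k\le1$ this is a dimensionally bounded sum, so
\begin{equation*}
\dashint_{\frac98 Q_i}\Bigabs{\frac{u_{\Ocal}-\bar u_i}{\diam(Q_i)}}^p\dx\le c(d)\sum_{k\in A_i\cup\{i\}}\Bigl(\frac{\abs{\bar u_k-\bar u_i}}{\diam(Q_i)}\Bigr)^p.
\end{equation*}
Applying \eqref{c7} to each summand, then Jensen's inequality $\bigl(\dashint\abs{\nabla u}\bigr)^p\le\dashint\abs{\nabla u}^p$, and finally using that each $\tfrac32 Q_k$ with $k\in A_i\cup\{i\}$ lies inside a fixed dimensional multiple of $Q_i$ of comparable measure, one arrives at \eqref{c8}.

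\emph{Main obstacle.} Step~1 when both dilated cubes lie in $\Omega$, and the reduction of Step~2 to \eqref{c7}, are routine Poincaré/chaining arguments together with the overlap bookkeeping from Proposition~\ref{cover}. The delicate point is the boundary case of \eqref{c7}: one must know that a Whitney cube of $\Ocal$ sticking out of $\Omega$ carries a \emph{definite} proportion of $\Omega^c$ at its own scale. This is exactly the measure–density (lower Ahlfors regularity) of the complement of a Lipschitz domain; it is the sole source of the dependence of the constant on $\Omega$, and it requires a little care at scales exceeding the Lipschitz radius, where it is replaced by the global Poincaré inequality on $\Omega$. Granting this, the Poincaré inequality for $W^{1,p}$ functions vanishing on a set of positive proportion is classical.
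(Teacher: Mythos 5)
Your treatment of \eqref{c7} matches the paper's argument: both proofs chain $\bar u_i$ and $\bar u_j$ through the mean over $\tfrac{9}{8}Q_i\cap\tfrac{9}{8}Q_j$ (whose measure is comparable to $\abs{Q_i}$ by \ref{c3}), apply the Poincar\'e inequality on each $\tfrac{9}{8}$--cube, and, in the case $\bar u_j=0$, enlarge to $\tfrac{3}{2}Q_j$ and use the Lipschitz character of $\partial\Omega$ in the form $\abs{\tfrac{3}{2}Q_j}\le c(\Omega)\abs{\tfrac{3}{2}Q_j\cap\Omega^c}$ together with the Poincar\'e inequality for functions vanishing on a subset of proportional measure. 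Your closing remark correctly identifies this measure--density property of $\Omega^c$ as the only place where the Lipschitz hypothesis, and hence the dependence of the constant on $\Omega$, enters.

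The one divergence is in \eqref{c8}. The symbol $\bar u$ there is a typo for $u$: the paper's proof dismisses the case $\bar u_i\neq 0$ as ``just the Poincar\'e inequality'', which only makes sense for $\dashint_{\frac98Q_i}\abs{u-\bar u_i}^p\dx$, and the proof of Lemma~\ref{stabil} invokes \eqref{c7} and \eqref{c8} precisely to control $\int_{Q_i}\abs{u-\bar u_j}^p\dx$. You instead read $\bar u$ as $u_{\Ocal}$ and derive that version from \eqref{c7} via the partition of unity; the derivation is sound, but the resulting statement is not the one the stability lemma consumes. No new idea is needed to repair this: the intended version is immediate from the tools you already set up, namely plain Poincar\'e on $\tfrac{9}{8}Q_i$ when $\tfrac{9}{8}Q_i\subset\Omega$, and otherwise (so $\bar u_i=0$) the enlargement to $\tfrac{3}{2}Q_i$ combined with the measure density of $\Omega^c$ and the Poincar\'e inequality for functions vanishing on a proportional subset, exactly as in the boundary case of your Step 1 but at exponent $p$.
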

\begin{proof}
The statement of \eqref{c8}, in case $\bar{u}_i\neq0$ is just the \Poincare{} inequality. In case $\bar{u}_i=0$ we enlarge the cube to $\frac32Q_i$. Since we have a Lipschitz boundary and $\frac98Q_i\cap\Ocal\neq \emptyset$ we find that $\abs{\frac32Q_i}\leq c(\Omega)\abs{\frac32Q_i\cap\Omega^c}$. This implies, that we can apply \Poincare's inequality on $u$ for the cube $\frac32Q_i$, which finishes the proof of \eqref{c8}.

To proof \eqref{c7} observe at first, that since $Q_i$ and $Q_j$ are neighbours, we find by \ref{c3} that $C_{i,j}=\frac98Q_i\cap\frac{9}{8}Q_j $ have a comparible maesure, to $\abs{\frac32Q_i}$ and $\abs{\frac32Q_j}$.
Since in case $\bar u_i=\bar u_j=0$, there is nothing to show, let us assume that $\bar u_i\neq0$. By \Poincare{} we find
\begin{align*}
&\abs{\bar{u}_j-\bar{u}_i}\leq \Bigabs{\bar u_j-\dashint_{C_{i,j}}u\dx}+\Bigabs{\dashint_{C_{i,j}}u-\mean{u}_{\frac98Q_i}\dx}\\
&\quad\leq \Bigabs{\bar u_j-\dashint_{C_{i,j}}u\dx}+c(d)\dashint_{\frac98Q_{i}}\abs{u-\mean{u}_{\frac98Q_i}}\dx
\leq \Bigabs{\bar u_j-\dashint_{C_{i,j}}u\dx}+c(d)\diam(Q_i)\dashint_{\frac98 Q_i}\abs{\nabla u}\dx
\end{align*}
In case $\bar u_j\neq 0$ as well, then we find by symmetry
\[
\Bigabs{\bar u_j-\dashint_{C_{i,j}}u\dx}\leq c(d)\diam(Q_j)\dashint_{\frac98 Q_j}\abs{\nabla u}\dx.
\]
In case $\bar u_j=0$ we enlarge the set $C_{i,j}$ to $\frac32 Q_j$ for which we know that $\abs{\frac32Q_j}\leq c(\Omega)\abs{\frac32Q_j\cap\Omega^c}$.
Therefore, finally \Poincare{}'s inequality implies
\[
\dashint_{C_{i,j}}\abs{u}\dx\leq c(d)\dashint_{\frac32Q_j}\abs{u}\dx\leq c(d,\Omega)\diam(Q_j)\dashint_{\frac32Q_j}\abs{\nabla u}\dx.
\]
\end{proof}
This implies, that the {\em relative truncation} is stable in Sobolev spaces.
\begin{lemma}[Stability]
\label{stabil}
Let $u\in W^{1,p}_0(\Omega;\setR^N)$, then $u_\Ocal\in W^{1,p}_0(\Omega;\setR^N)$. Moreover, the following estimate holds
\begin{align*}
&\int_\Omega \abs{\nabla(u-u_{\Ocal})}^p\dx\leq c(d,p,\Omega)\int_{\Ocal\cap\Omega}\abs{\nabla u}^p\dx.
\end{align*}
\end{lemma}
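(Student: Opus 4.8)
\emph{Proof strategy}. It suffices to show that $w:=u-u_\Ocal$ belongs to $W^{1,p}_0(\Omega;\setR^N)$ and satisfies the stated bound, since then $u_\Ocal=u-w\in W^{1,p}_0(\Omega;\setR^N)$. First I would observe that $w$ vanishes a.e.\ on $\Rd\setminus\Ocal$ (there $u_\Ocal=u$), and also on $\Rd\setminus\Omega$: each block $\psi_i\bar u_i$ is supported in $\frac98Q_i$, which lies in $\Ocal$ by property~\ref{c2}, and if $\frac98Q_i\not\subset\Omega$ then $\bar u_i=0$; so $\psi_i\bar u_i$ vanishes off $\Omega$ in all cases, and $w$ is compactly supported in $\overline\Omega$. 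Therefore, once $w$ is shown to lie in $W^{1,p}(\Rd;\setR^N)$ with weak gradient in $L^p(\Rd)$, the vanishing outside the Lipschitz domain $\Omega$ promotes it to $W^{1,p}_0(\Omega;\setR^N)$.

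To identify the weak gradient I would use that on the open set $\Ocal$ the partition of unity of property~\ref{c6} satisfies $\sum_i\psi_i\equiv1$, so $w=\sum_i\psi_i(u-\bar u_i)$ there. Each summand is the product of the Lipschitz function $\psi_i$, supported in $\frac98Q_i\subset\Ocal$, with $u-\bar u_i\in W^{1,p}(\tfrac98Q_i;\setR^N)$ --- finiteness of the local norm is exactly \eqref{c8} together with $\nabla u\in L^p$ --- hence it lies in $W^{1,p}_0(\tfrac98Q_i;\setR^N)\subset W^{1,p}(\Rd;\setR^N)$. By the finite--overlap property~\ref{c5} the family $\set{\tfrac98Q_i}$ is locally finite, so the sum is locally finite and $w\in W^{1,p}_\loc(\Ocal;\setR^N)$ with $\nabla w=\nabla u-\sum_i\bar u_i\otimes\nabla\psi_i$ on $\Ocal$. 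To see that this formula is the weak gradient of $w$ on all of $\Rd$, I would test against $\varphi\in C_0^\infty(\Rd)$: since $w=0$ off $\Ocal$, $\int_{\Rd}w\,\partial_k\varphi\dx=\sum_i\int_{\Rd}\psi_i(u-\bar u_i)\,\partial_k\varphi\dx$ by local finiteness, and each term integrates by parts \emph{with no boundary contribution}, because $\psi_i(u-\bar u_i)\in W^{1,p}_0(\tfrac98Q_i)$. This is precisely the point to handle with care: $\partial\Ocal$ is never seen, because the blocks sit strictly inside $\Ocal$. Accordingly $\nabla w=\chi_\Ocal\big(\nabla u-\sum_i\bar u_i\otimes\nabla\psi_i\big)\in L^1_\loc(\Rd)$, and it only remains to bound this in $L^p$.

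The estimate is bookkeeping with the covering. Fix a cube $Q_j$; on its interior only indices $i\in A_j\cup\set{j}$ contribute, and $\sum_i\nabla\psi_i\equiv0$ there, so $\nabla w=\nabla u+\sum_{i\in A_j}(\bar u_j-\bar u_i)\otimes\nabla\psi_i$ on $Q_j$. Using $\diam(Q_i)\abs{\nabla\psi_i}\le c(d)$ from property~\ref{c6}, $\diam(Q_i)\approx\diam(Q_j)$ from property~\ref{c3}, and \eqref{c7} to bound $\abs{\bar u_j-\bar u_i}/\diam(Q_i)$ by $\dashint_{\frac32Q_i}\du\dx+\dashint_{\frac32Q_j}\du\dx$, I would raise to the power $p$ (Jensen's inequality and $\#A_j\le 4^d-2^d$), integrate over $Q_j$, and sum in $j$. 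The disjointness~\ref{c1}, the bounded number of neighbours~\ref{c4}, the finite overlap~\ref{c5} of $\set{\tfrac32Q_i}$, and $\bigcup_i\tfrac32Q_i=\Ocal$ then collapse the resulting triple sum to $c(d,p,\Omega)\int_\Ocal\du^p\dx$. Since $\nabla u=0$ outside $\Omega$ after the zero extension, this equals $c(d,p,\Omega)\int_{\Ocal\cap\Omega}\du^p\dx$; and because $\nabla w=0$ off $\Ocal$, the left-hand side dominates $\int_\Omega\abs{\nabla(u-u_\Ocal)}^p\dx$, which is the asserted inequality.

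The genuinely delicate step is the second paragraph. A priori $u_\Ocal$ is merely a function that is Lipschitz-regular inside $\Ocal$ and coincides with $u$ outside, and naively gluing two Sobolev functions across the possibly irregular set $\partial\Ocal$ would produce a singular term supported on $\partial\Ocal$. What removes the obstacle is to treat $w$ not as such a gluing but as the globally defined, locally finite sum $\sum_i\psi_i(u-\bar u_i)$ whose summands are compactly supported strictly inside $\Ocal$; the inclusion $\frac98Q_i\subset\Ocal$ of property~\ref{c2} is exactly what is needed, and the zero extension of $u$ across $\partial\Omega$ stays harmless thanks to $\bar u_i=0$ whenever $\frac98Q_i\not\subset\Omega$.
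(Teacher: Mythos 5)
Your proof is correct and follows essentially the same route as the paper: decompose $u-u_\Ocal=\sum_i\psi_i(u-\bar u_i)$ over the Whitney covering, use $\sum_i\nabla\psi_i=0$ to reduce to the differences $\bar u_j-\bar u_i$, control these by \eqref{c7} (and \eqref{c8}), and sum using the finite overlap; your extra care about the weak gradient across $\partial\Ocal$ is a welcome addition the paper glosses over. One small imprecision: near $\partial\Ocal$ the family $\set{\tfrac98Q_i}$ is not locally finite (infinitely many cubes accumulate there), so the interchange of sum and integral against a test function meeting $\partial\Ocal$ should be justified by absolute convergence via the bounded overlap and the $L^p$ bounds rather than by ``local finiteness.''
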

\begin{proof}
It is enough to show the estimate since the zero-trace follows from the very definition of the {\em relative truncation} immediately. We define by $A_i$ the index set of all $j$, such that $\frac{9}{8}Q_j\cap \frac{9}{8}Q_i\neq \emptyset$. Observe here, that $\# A_j\leq 4^n-2^n$ by \ref{c4}. Next we use \ref{c1}, the fact that we have a partition of unity, \ref{c5}, \ref{c3} and \ref{c6}.
\begin{align*}
&\int \abs{\nabla(u-u_{\Ocal})}^p\dx
=\int_{\Ocal} \abs{\nabla (u-u_{\Ocal})}^p\dx
=\sum_{i}\int_{Q_i}\Bigabs{\nabla u-\sum_{j\in A_i}\nabla\psi_j \bar{u}_j}^p\dx\\
&\leq \sum_{i}\int_{Q_i}\Bigabs{\sum_{j\in A_i}\nabla(u-\psi_j \bar{u}_j)}^p\dx
\leq c(d,p)\sum_{i}\sum_{j\in A_i}\int_{Q_i}\abs{\nabla(u-\psi_j \bar{u}_j)}^p\dx
\\
&\leq c(d,p)\sum_{i}\int_{Q_i}\abs{\nabla u}^p\dx+c(d,p)\sum_{i}\sum_{j\in A_i}\int_{Q_i}\Bigabs{\frac{u-u_j}{\diam(Q_i)}}^p\dx
\end{align*}
Now \ref{c1},\eqref{c7} and \eqref{c8} imply together with Jensen's inequality, \ref{c5} and \ref{c2} that
\begin{align*}
&\int \abs{\nabla(u-u_{\Ocal})}^p\dx
\leq c(d,p)\int_{\Ocal}\abs{\nabla u}^p\dx+c(d,p,\Omega)\sum_{i}\int_{\frac32Q_i}\abs{\nabla u}^p\dx\leq c(d,p)\int_{\Ocal}\abs{\nabla u}^p\dx.
\end{align*}
\end{proof}

\section{Uniform a~priori estimates}
In this section, we will not mention the dependence of the constants on $d,N$ anymore.
\label{s2}
\begin{proposition}
Let $\Omega \subset \mathbb{R}^{d}$ be a bounded Lipschitz domain, $S$ satisfy \eqref{A1}--\eqref{A2} and $h\in L^1(\Omega)$ not identically equal to zero. Then there exists $\varepsilon_0>0$ depending only on $\frac{C_2^{p'}}{C_1}$, $p$ and $\Omega$ such that for any $\epsilon \in (0,\epsilon_0)$ the following holds. If $f\in L^p(\Omega; \mathbb{R}^{d\times N})\cap L^p_{(Mh)^{-\epsilon}}(\Omega; \mathbb{R}^{d\times N})$ and $u\in W^{1,p}_0(\Omega; \mathbb{R}^N)$ satisfies \eqref{EQ} in a weak sense, then $\nabla u \in L^p_{(Mh)^{-\epsilon}}(\Omega; \mathbb{R}^{d\times N})$ as well.

Moreover,
\begin{equation}
\int_{\Omega} \frac{|\nabla u|^p}{(Mh)^{\varepsilon}}\dx \le C(C_1,C_2,p,\Omega) \int_{\Omega} \frac{|f|^p+C_3}{(Mh)^{\varepsilon}}\dx. \label{apriori}
\end{equation}
\end{proposition}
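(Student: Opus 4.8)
The plan is to run, in the spirit of Iwaniec's a~priori estimates, a comparison argument in which the $p$--harmonic comparison map is replaced by the relative truncation of Section~\ref{s1}, and then to upgrade the resulting \emph{unweighted} level--set estimate to the \emph{weighted} bound \eqref{apriori} by a Calder\'on--Zygmund iteration exploiting the Muckenhoupt properties of $\omega:=(Mh)^{-\epsilon}$ supplied by Lemma~\ref{cor:dual}. (Since $h\not\equiv0$, $Mh$ is everywhere positive and bounded below on the bounded set $\Omega$, so $\omega$ is a legitimate finite positive weight there.) For $\lambda>0$ put
\[
\Ocal_\lambda:=\set{x\in\Rd:\ M\bigl(\abs{\nabla u}\,\chi_\Omega\bigr)(x)>\lambda},
\]
which is open (lower semicontinuity of $M$) and of finite measure (weak type $(1,1)$), hence a proper subset of $\Rd$, so that Proposition~\ref{cover} applies to it. Let $u_{\Ocal_\lambda}$ be the relative truncation of $u$ relative to $\Ocal_\lambda$; by Lemma~\ref{stabil}, $u-u_{\Ocal_\lambda}\in W^{1,p}_0(\Omega;\setR^N)$ and $\nabla(u-u_{\Ocal_\lambda})$ is supported in $\Ocal_\lambda\cap\Omega$. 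The first thing I would record is the pointwise bound $\abs{\nabla u_{\Ocal_\lambda}}\le c(d)\lambda$ a.e.: off $\Ocal_\lambda$ this is immediate because $u_{\Ocal_\lambda}=u$ there and $\abs{\nabla u}\le M(\abs{\nabla u}\chi_\Omega)\le\lambda$ a.e.\ by Lebesgue differentiation, while on a cube $Q_i$ of the covering we have $\nabla u_{\Ocal_\lambda}=\sum_j\nabla\psi_j\,(\bar u_j-\bar u_i)$ by \ref{c6}, property \ref{c2} provides a point of $\Ocal_\lambda^c$ within $4\diam(Q_i)$ of $Q_i$, which forces $\dashint_{\frac32 Q_i}\abs{\nabla u}\dx\le c(d)\lambda$, and then \eqref{c7} combined with $\diam(Q_i)\abs{\nabla\psi_j}\le c(d)$ yields the claim.

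Next I would test the weak formulation of \eqref{EQ} with $u-u_{\Ocal_\lambda}$. Since $\nabla(u-u_{\Ocal_\lambda})$ vanishes off $\Ocal_\lambda\cap\Omega$, this reads
\[
\int_{\Ocal_\lambda\cap\Omega} S(\cdot,\nabla u)\cdot\nabla u\dx
=\int_{\Ocal_\lambda\cap\Omega} S(\cdot,\nabla u)\cdot\nabla u_{\Ocal_\lambda}\dx
+\int_{\Ocal_\lambda\cap\Omega} \abs{f}^{p-2}f\cdot\nabla(u-u_{\Ocal_\lambda})\dx .
\]
Bounding the left side from below by the coercivity \eqref{A1}, the first term on the right by the growth bound \eqref{A2} together with $\abs{\nabla u_{\Ocal_\lambda}}\le c\lambda$, and the last term by $\abs{f}^{p-1}(\abs{\nabla u}+c\lambda)$, and then absorbing every power of $\abs{\nabla u}$ into the left side by Young's inequality, one arrives at a Caccioppoli--type inequality on the level set,
\[
\int_{\Ocal_\lambda\cap\Omega}\abs{\nabla u}^p\dx
\le c\,K\,\lambda^p\,\abs{\Ocal_\lambda}+c\int_{\Ocal_\lambda\cap\Omega}\bigl(\abs{f}^{p}+C_3\bigr)\dx ,
\]
with $c=c(p)$ and an explicit structural constant $K=K(C_1,C_2,p)$ of size $K\simeq1+C_2^{p'}/C_1$ (a pure $p$--constant in the model case); it is the size of $K$ that will dictate how small $\epsilon_0$ must be.

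The decisive step is to pass from this to \eqref{apriori}. Decomposing $\Omega$ along the dyadic level sets $\Ocal_{2^k}$ and using $\abs{\nabla u}\le M(\abs{\nabla u}\chi_\Omega)$ one first gets $\int_\Omega\abs{\nabla u}^p\omega\dx\le c\sum_k 2^{kp}\,\mu(\Ocal_{2^k}\cap\Omega)$, where $\mathrm{d}\mu:=\omega\dx$. On the other hand, since $\epsilon\in(0,p-1)$, Lemma~\ref{cor:dual} gives $\omega\in\Acal_p$ with an $\Acal_p$--constant depending only on $p$ and $\epsilon$ --- crucially, not on $h$ --- so that $M$ is bounded on $L^p_\omega$ with an $h$--independent norm. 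A Calder\'on--Zygmund decomposition of each $\Ocal_{2^k}$, fed by the Caccioppoli inequality above and by these weighted bounds, converts the self--referential term $K\lambda^p\abs{\Ocal_\lambda}$ into $c\,K\,\eta(\epsilon)\int_\Omega\abs{\nabla u}^p\omega\dx$, where $\eta(\epsilon)\to0$ as $\epsilon\to0$ is the quantitative gain extracted from Lemma~\ref{cor:dual} for small $\epsilon$. Fixing $\epsilon_0=\epsilon_0(C_2^{p'}/C_1,p,\Omega)$ so small that $c\,K\,\eta(\epsilon)<\tfrac12$ for $\epsilon<\epsilon_0$, this term is absorbed and one is left with $\int_\Omega\abs{\nabla u}^p\omega\dx\le c\int_\Omega(\abs{f}^p+C_3)\omega\dx$, i.e.\ \eqref{apriori}; in particular $\nabla u\in L^p_\omega(\Omega;\RdN)$. (A routine truncation of $\omega$ from above followed by monotone convergence makes all sums finite a priori and legitimises the absorption.)

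I expect this last step to be the main obstacle. Everything involving the relative truncation --- the pointwise bound and the level--set Caccioppoli estimate --- is essentially bookkeeping once Section~\ref{s1} is available; the real work lies in promoting an \emph{unweighted} level--set inequality to the \emph{weighted} estimate \eqref{apriori} \emph{with a constant that does not see $h$}. This demands a genuinely quantitative use of Muckenhoupt theory, since the loss $K\simeq C_2^{p'}/C_1$ incurred by Young's inequality must be defeated by the smallness that $(Mh)^{-\epsilon}$ provides for small $\epsilon$; this balance is exactly why $\epsilon_0$ must depend on the ellipticity ratio $C_2^{p'}/C_1$, and it explains why, unlike the linear case $p=2$, a threshold independent of that ratio cannot be expected.
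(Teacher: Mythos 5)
There is a genuine gap, and it sits exactly where you predicted it would: in the passage from the unweighted level--set inequality to the weighted bound \eqref{apriori}. The root cause is your choice of level sets. You truncate on $\Ocal_\lambda=\set{M(\abs{\nabla u}\chi_\Omega)>\lambda}$, which is the classical Iwaniec/Lewis setup; the resulting Caccioppoli inequality $\int_{\Ocal_\lambda\cap\Omega}\abs{\nabla u}^p\dx\le cK\lambda^p\abs{\Ocal_\lambda}+c\int_{\Ocal_\lambda\cap\Omega}(\abs{f}^p+C_3)\dx$ is fine, but it is an inequality in \emph{Lebesgue} measure on level sets that bear no relation to $h$. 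Multiplying it by $\lambda^{-1-\epsilon}$ and integrating (the only mechanism that actually produces a small factor $\epsilon$ in front of the self--referential term) manufactures the weight $(M(\abs{\nabla u}\chi_\Omega))^{-\epsilon}$, i.e.\ Iwaniec's unweighted $L^{p-\epsilon}$ estimate --- not the weight $(Mh)^{-\epsilon}$. Your proposed bridge, a Calder\'on--Zygmund iteration that ``converts $K\lambda^p\abs{\Ocal_\lambda}$ into $cK\eta(\epsilon)\int\abs{\nabla u}^p\omega\dx$ with $\eta(\epsilon)\to0$,'' is not an argument: as $\epsilon\to0$ the weight $\omega=(Mh)^{-\epsilon}$ tends to the constant $1$ and its $\Acal_p$ constants only improve, so Lemma~\ref{cor:dual} supplies no smallness whatsoever; and a good--$\lambda$ comparison between $\mu(\Ocal_{2\lambda})$ and $\mu(\Ocal_\lambda)$ with a constant beating $2^p$ is precisely what the Caccioppoli inequality does not give you for free.

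The paper's proof avoids this entirely by choosing the level sets of the \emph{weight generator}: $\Ocal(\lambda)=\set{Mg>\lambda}$ with $g=h\chi_\Omega+\delta$, and testing with the relative truncation $u_{\Ocal(\lambda)}$ there. This has three consequences you lose with your choice. First, the Whitney cubes of $\Ocal(\lambda)$ touch $\set{Mg\le\lambda}$, so the $\Acal_1$ property of $(Mg)^\alpha$ (Lemma~\ref{cor:dual}) gives $\dashint_{5Q_i}(Mg)^\alpha\dx\le c(\alpha)\lambda^\alpha$, which is what controls the error terms after H\"older on each cube. Second, no pointwise bound $\abs{\nabla u_{\Ocal(\lambda)}}\le c\lambda$ is available or needed --- the gradient terms on $\set{Mg>\lambda}$ are kept and estimated in the weighted averages. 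Third, and decisively, integrating the resulting inequality against $\lambda^{-1-\epsilon}\dlambda$ and using Fubini turns $\int_{\set{Mg\le\lambda}}\cdots$ into $\tfrac1\epsilon\int_\Omega\cdots(Mg)^{-\epsilon}\dx$ and $\int_{\set{Mg>\lambda}}\lambda^{\beta-1-\epsilon}(Mg)^{-\beta}\cdots$ into $\tfrac{1}{\beta-\epsilon}\int_\Omega\cdots(Mg)^{-\epsilon}\dx$; the mismatch $\tfrac1\epsilon$ versus $\tfrac{1}{\beta-\epsilon}$ is the sole source of the small factor $\epsilon$ that permits absorption, and it automatically lands every term in the correct space $L^p_{(Mh)^{-\epsilon}}$. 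In short: the truncation must be adapted to $Mh$, not to $M(\nabla u)$; with your level sets the weighted estimate \eqref{apriori} is not reachable by the route you describe.
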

\begin{proof}
First, we extend $u$ and $f$ with zero outside $\Omega$. The function $h$, we will approximate by $g=h\chi_\Omega+\delta$. This implies, that $Mg>\delta$ and therefore $f,\nabla u\in L^p_{(Mg)^{-\epsilon}}(\Omega; \mathbb{R}^{d\times N})$ a priori. Once the estimate is established for $g$ with constants independent of $\delta$ the general result follows by letting $\delta\to 0$ and monotone convergence.
For any $\lambda>0$ we define $\Ocal(\lambda):=\{x\in \mathbb{R}^d: \, Mg(x)>\lambda\}$. Since the maximal function is sub-linear it is an open set. In case $\Ocal(\lambda)=\setR^n$, we define $u_{\Ocal(\lambda)}=0$. Else, we are able to construct the {\em relative truncation} $u_{\Ocal(\lambda)}$. Testing \eqref{EQ} with $u_{\Ocal(\lambda)}$ which is possible due to Lemma~\ref{stabil} we get the identity
\begin{equation}\label{E1}
\begin{split}
\int_{\{Mg\le \lambda\}}S\cdot \nabla u \dx &= \int_{\{Mg\le \lambda\}}|f|^{p-2}f\cdot \nabla u\dx + \int_{\{Mg>\lambda \}}|f|^{p-2}f\cdot \nabla u_{{\Ocal(\lambda)}}\dx \\
&\quad-\int_{\{Mg>\lambda\}}S\cdot \nabla u_{{\Ocal(\lambda)}}\dx.
\end{split}
\end{equation}
Next, we focus on the estimates of integrals on the set where $Mg>\lambda$. Consider arbitrary $G\in L^{p'}(\Omega; \mathbb{R}^{d\times N})$ and arbitrary $\alpha \in (0,1)$. We get using \ref{c6}--the property of the partition of unity, \eqref{c7}, \ref{c5} H\"older's inequality and \ref{c2} \begin{equation}\label{E2}
\begin{split}
&\int_{\{Mg>\lambda\}} \abs{G\cdot \nabla u_{{\Ocal(\lambda)}}}\dx \le \sum_i \int_{Q_i}|G| \Bigabs{\sum_{j\in A_i} \bar{u}_j \nabla \psi_j}\dx\\
&= \sum_i \int_{Q_i}|G| \Bigabs{\sum_{j\in A_i} (\bar{u}_j-\bar{u}_i) \nabla \psi_j}\dx\\
&\le c(\Omega)\sum_i\sum_{j\in A_i} |{Q_i}|\bigg(\dashint_{Q_i}|G| \dx\bigg)\bigg( \dashint_{\frac32Q_i}|\nabla u|\dx+\dashint_{\frac32Q_j}|\nabla u|\dx\bigg)\\
&= c(\Omega)\sum_i \sum_{j\in A_i} |{Q_i}|\bigg(\dashint_{Q_i}\frac{|G|}{(Mg)^{\frac{\alpha}{p}}} (Mg)^{\frac{\alpha}{p}}\dx \bigg) \bigg(\dashint_{\frac32Q_i}\frac{|\nabla u|}{(Mg)^{\frac{\alpha}{p'}}}(Mg)^{\frac{\alpha}{p'}}\dx
+\dashint_{\frac32Q_j}\frac{|\nabla u|}{(Mg)^{\frac{\alpha}{p'}}}(Mg)^{\frac{\alpha}{p'}}\dx
\bigg)\\
&\le c(\Omega)\sum_i \sum_{j\in A_i}|{Q_i}|\bigg(\dashint_{Q_i}\frac{|G|^{p'}}{(Mg)^{\frac{\alpha p'}{p}}} \dx \bigg)^{\frac{1}{p'}} \bigg(\bigg(\dashint_{\frac32Q_i}\frac{|\nabla u|^p}{(Mg)^{\frac{\alpha p}{p'}}}\dx\bigg)^{\frac{1}{p}}+\bigg(\dashint_{\frac32Q_i}\frac{|\nabla u|^p}{(Mg)^{\frac{\alpha p}{p'}}}\dx\bigg)^{\frac{1}{p}}\bigg)
\\
&\quad
\times\bigg(\dashint_{5Q_i}(Mg)^{\alpha}\dx \bigg).
\end{split}
\end{equation}
We estimate the last integral on the right hand side using two properties. First, since by Lemma~\ref{cor:dual} $(Mg)^{\alpha}$ is an $\mathcal{A}_1$ Muckenhoupt weight, $M(Mg)^{\alpha} \le C(\alpha,d)(Mg)^{\alpha}$. Second, by \ref{c1} we know that $9Q_i\cap {\Ocal(\lambda)}^c\neq \emptyset$.
Consequently, for some $x_0\in9Q_i\cap {\Ocal(\lambda)}^c$, we have 
$$
\begin{aligned}
\dashint_{5Q_i}(Mg)^{\alpha}\dx &\le c\dashint_{9Q_i}(Mg)^{\alpha}\dx\le cM(Mg)^{\alpha}(x_0)\\
&\le c(\alpha)(Mg)^{\alpha}(x_0)\le c(\alpha)\lambda^{\alpha}.
\end{aligned}
$$
We can use this to estimate \eqref{E2} in the following way. For $i\in \setN$ and $j\in A_i$, we deduce by Young's inequality
\begin{equation}\label{E3a}
\begin{split}
& \lambda^{\alpha}|{Q_i}|\bigg(\dashint_{Q_i}\frac{|G|^{p'}}{(Mg)^{\frac{\alpha p'}{p}}} \dx \bigg)^{\frac{1}{p'}} \bigg(\dashint_{\frac32Q_j}\frac{|\nabla u|^p}{(Mg)^{\frac{\alpha p}{p'}}}\dx\bigg)^{\frac{1}{p}}\\
&\le c(\Omega,\alpha)|{Q_i}|\bigg(\dashint_{Q_i}\frac{\lambda^{\frac{\alpha p'}{p}}|G|^{p'}}{(Mg)^{\frac{\alpha p'}{p}}} \dx  + \dashint_{\frac32Q_j}\frac{\lambda^{\frac{\alpha p}{p'}}|\nabla u|^p}{(Mg)^{\frac{\alpha p}{p'}}}\dx\bigg).
\end{split}
\end{equation}
Therefore \eqref{E2} implies using \ref{c5}, that
\begin{equation}\label{E3}
\begin{split}
&\int_{\{Mg>\lambda\}} \abs{G\cdot \nabla u_{{\Ocal(\lambda)}}}\dx 
\le c(\Omega,\alpha)\int_{\{Mg>\lambda\}\cap\Omega} \frac{\lambda^{\frac{\alpha p'}{p}}|G|^{p'}}{(Mg)^{\frac{\alpha p'}{p}}} \dx  + \frac{\lambda^{\frac{\alpha p}{p'}}|\nabla u|^p}{(Mg)^{\frac{\alpha p}{p'}}}\dx.
\end{split}
\end{equation}
Consequently, we get from \eqref{E1} that
\begin{equation}\label{E8}
\begin{split}
\int_{\{Mg\le \lambda\}}S\cdot \nabla u \dx &\le \int_{\{Mg\le \lambda\}}|f|^{p-1}|\nabla u|\dx + c(\Omega,\alpha)\int_{\{Mg>\lambda \}}\frac{\lambda^{\frac{\alpha p'}{p}}(|f|^p+|S|^{p'})}{(Mg)^{\frac{\alpha p'}{p}}} + \frac{\lambda^{\frac{\alpha p}{p'}}|\nabla u|^p}{(Mg)^{\frac{\alpha p}{p'}}}\dx.
\end{split}
\end{equation}
We set $\overline{(p-1)}:= \min((p-1),(p-1)^{-1})$ and take $\epsilon\in (0,\alpha {\overline{(p-1)}})$. 
We multiply the above inequality by $\lambda^{-1-\varepsilon}$ and integrate over $\lambda \in (0,\infty)$ to deduce
\begin{equation}\label{E9}
\begin{split}
&\int_0^{\infty}\int_{\{Mg\le \lambda\}}\frac{S\cdot \nabla u}{\lambda^{1+\varepsilon}} \dx \dlambda \le \int_0^{\infty}\int_{\{Mg\le \lambda\}}\frac{|f|^{p-1}|\nabla u|}{\lambda^{1+\varepsilon}}\dx \dlambda\\
 &\quad+ c(\Omega,\alpha) \int_0^{\infty}\int_{\{Mg>\lambda \}}\frac{\lambda^{\frac{\alpha p'}{p}-1-\varepsilon}(|f|^p+|S|^{p'})}{(Mg)^{\frac{\alpha p'}{p}}} + \frac{\lambda^{\frac{\alpha p}{p'}-1-\varepsilon}|\nabla u|^p}{(Mg)^{\frac{\alpha p}{p'}}}\dx\dlambda.
\end{split}
\end{equation}
We get on the one hand using the Fubini theorem
\begin{equation*}
\begin{split}
&\frac{1}{\varepsilon}\int_{\Omega}\frac{S\cdot \nabla u}{(Mg)^{\varepsilon}}\dx=\int_{\Omega}\int_{Mg}^{\infty}\frac{S\cdot \nabla u}{\lambda^{1+\varepsilon}} \dlambda \dx= \int_{0}^{\infty}\int_\set{Mg\le \lambda}\frac{S\cdot \nabla u}{\lambda^{1+\varepsilon}} \dx \dlambda; 
\end{split}
\end{equation*}
Thus on the other hand by \eqref{E9} and the Fubini theorem 
\begin{align*}
&\frac{1}{\varepsilon}\int_{\Omega}\frac{S\cdot \nabla u}{(Mg)^{\varepsilon}}\dx\le \int_0^{\infty}\int_{\{Mg\le \lambda\}}\frac{|f|^{p-1}|\nabla u|}{\lambda^{1+\varepsilon}}\dx \dlambda\\
 &\qquad + c(\Omega,\alpha)\int_0^{\infty}\int_{\{Mg>\lambda \}}\frac{\lambda^{\frac{\alpha p'}{p}-1-\varepsilon}(|f|^p+|S|^{p'})}{(Mg)^{\frac{\alpha p'}{p}}} + \frac{\lambda^{\frac{\alpha p}{p'}-1-\varepsilon}|\nabla u|^p}{(Mg)^{\frac{\alpha p}{p'}}}\dlambda\dx\\
&\quad= \int_{\Omega}\int_{Mg}^{\infty}\frac{|f|^{p-1}|\nabla u|}{\lambda^{1+\varepsilon}} \dlambda 
 + c(\Omega,\alpha)\int_{0}^{Mg}\frac{\lambda^{\frac{\alpha p'}{p}-1-\varepsilon}(|f|^p+|S|^{p'})}{(Mg)^{\frac{\alpha p'}{p}}} + \frac{\lambda^{\frac{\alpha p}{p'}-1-\varepsilon}|\nabla u|^p}{(Mg)^{\frac{\alpha p}{p'}}}\dlambda \dx\\
 &\quad = \frac{1}{\varepsilon}\int_{\Omega}\frac{|f|^{p-1}|\nabla u|}{(Mg)^{\varepsilon}}\dx
 + c(\Omega,\alpha)\int_{\Omega}\frac{1}{\frac{\alpha p'}{p}-\varepsilon}\frac{(|f|^p+|S|^{p'})}{(Mg)^{\varepsilon}}+ \frac{1}{\frac{\alpha p}{p'}-\varepsilon}\frac{|\nabla u|^p}{(Mg)^{\varepsilon}} \dx\\
  &\quad \le \frac{1}{\varepsilon}\int_{\Omega}\frac{|f|^{p-1}|\nabla u|}{(Mg)^{\varepsilon}}\dx + \frac{c(\Omega,\alpha)}{\alpha \overline{(p-1)}  -\varepsilon} \int_{\Omega}\frac{|f|^p+|S|^{p'}+|\nabla u|^p}{(Mg)^{\varepsilon}}\dx.
\end{align*}
By the assumptions \eqref{A1},\eqref{A2} and Young's inequality we deduce
\begin{equation}\label{E11}
\begin{split}
\int_{\Omega}\frac{|\nabla u|^p}{(Mg)^{\varepsilon}}\dx \le \frac{c(p)}{C_1}\int_{\Omega}\frac{|f|^{p}}{(Mg)^{\varepsilon}}\dx + \frac{\epsilon C_2^{p'}c(\Omega,p)}{C_1 \alpha (\overline{(p-1)}  -\varepsilon)} \int_{\Omega}\frac{|f|^p+|\nabla u|^p+C_3}{(Mg)^{\varepsilon}}\dx.
\end{split}
\end{equation}
Thus, for $\epsilon \in (0,\epsilon_0)$, we  can absorb the term with $|\nabla u|^p$ on the right hand side of \eqref{E11} into the left hand side and we get \eqref{apriori} by letting $\delta\to 0$. Here 
\begin{align}
\label{epsilon}
\varepsilon_0= \frac{C_1 \overline{(p-1)}}{C_2^{p'}c(\Omega,p)},
\end{align}
since by this choice we can choose for $\epsilon\in (0,\epsilon_0)$ an $\alpha$ accordingly. Observe, that $\alpha\to1$ (which is the non-stable limit), when $\epsilon\to\epsilon_0$.
\end{proof}
%
%

\section{Existence of a solution}
\label{s3}
The proof is making essential use of the following theorem that can be found in \cite[Theorem~2.6]{BulDieSch15}.

\begin{theorem}[weighted, biting div--curl lemma]\label{T5}
  Let $\Omega\subset \setR^n$ be an open bounded set.
  Assume that for some $p\in (1,\infty)$ and given $\omega \in
  \Acal_p$ we have a sequence of vector-valued measurable functions $(a^k, b^k)_{k=1}^{\infty}:\Omega \to \mathbb{R}^n\times \mathbb{R}^n$ such that
  \begin{equation}
    \sup_{k\in \mathbb{N}}\int_{\Omega} \abs{a^k}^p\omega + \abs{b^k}^{p'}\omega \dx <\infty. \label{bit3}
  \end{equation}
  Furthermore, assume that for every bounded sequence
  $\{c^k\}_{k=1}^{\infty}$ from $W^{1,\infty}_0(\Omega)$ that fulfills
  $$
  \nabla c^k \rightharpoonup^* 0 \qquad \textrm{weakly$^*$ in }
  L^{\infty}(\Omega)
  $$
  there holds
  \begin{align}
    \lim_{k\to \infty} \int_{\Omega} b^k \cdot \nabla c^k \dx
    &=0, \label{bit4}
    \\
    \lim_{k\to \infty} \int_{\Omega} a^k_i \partial_{x_j} c^k -
    a^k_j \partial_{x_i} c^k \dx &=0 &&\textrm{for all }
    i,j=1,\ldots,n.\label{bit5}
  \end{align}
  Then there exists a subsequence $(a^k,b^k)$ that we do not relabel
  and there exists a non-decreasing sequence of measurable subsets
  $E_j\subset\Omega$ with $|\Omega \setminus E_j|\to 0$ as $j\to
  \infty$ such that
  \begin{align}
  a^k &\rightharpoonup a &&\textrm{weakly in } L^1(\Omega;\mathbb{R}^n), \label{bitfa}\\
  b^k &\rightharpoonup b &&\textrm{weakly in } L^1(\Omega;\mathbb{R}^n), \label{bitfb}\\
  a^k \cdot b^k \omega &\rightharpoonup a \cdot b\, \omega &&\textrm{weakly in } L^1(E_j) \quad \textrm{ for all } j\in \mathbb{N}. \label{bitf}
  \end{align}
\end{theorem}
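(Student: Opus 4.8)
The plan is to combine a weighted functional-analytic compactness step with Chacon's biting lemma to remove concentrations, and then to run a div--curl argument in which the two structural hypotheses \eqref{bit4}--\eqref{bit5} are accessed through Lipschitz truncations of the scalar and vector potentials produced by a weighted Helmholtz decomposition.

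First I would extract the weak limits. Since $\omega\in\Acal_p$ and $\Omega$ is bounded, $\omega^{1-p'}\in L^1(\Omega)$, so by H\"older's inequality
\[
\int_A|a^k|\dx\le\Big(\int_\Omega|a^k|^p\omega\dx\Big)^{1/p}\Big(\int_A\omega^{1-p'}\dx\Big)^{1/p'}
\]
for every measurable $A\subset\Omega$; by \eqref{bit3} the right-hand side is small uniformly in $k$ when $|A|$ is small, so $\{a^k\}$ is equi-integrable, and likewise $\{b^k\}$. Dunford--Pettis then yields a subsequence with $a^k\rightharpoonup a$, $b^k\rightharpoonup b$ in $L^1(\Omega)$, which is \eqref{bitfa}--\eqref{bitfb}, and since $\{a^k\}$, $\{b^k\}$ are bounded in the reflexive spaces $L^p_\omega$, $L^{p'}_\omega$, the same $a,b$ are their weak limits there. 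The nonnegative functions $(|a^k|^p+|b^k|^{p'})\omega$ are bounded in $L^1(\Omega)$, so Chacon's biting lemma provides a further subsequence and a non-decreasing family of measurable sets $E_j\subset\Omega$ with $|\Omega\setminus E_j|\to0$ on which this family is equi-integrable; since $|a^k\cdot b^k|\omega\le\frac1p|a^k|^p\omega+\frac1{p'}|b^k|^{p'}\omega$ by Young's inequality, $\{a^k\cdot b^k\omega\}$ is equi-integrable on each $E_j$ and, along a diagonal subsequence, converges weakly in $L^1(E_j)$ for all $j$ to a limit $\theta\in L^1_{\loc}(\Omega)$. It then remains to show $\theta=a\cdot b\,\omega$ a.e.; for this it is enough to prove that $\int_\Omega\chi_{E_j}(a^k\cdot b^k-a\cdot b)\,\omega\,\phi\dx\to0$ for every $\phi\in C^\infty_0(\setR^d)$ and every $j$.

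The core is this last convergence, where I would run a weighted, biting version of the Murat--Tartar div--curl argument. Using the weighted Calder\'on--Zygmund theory (available because $\omega\in\Acal_p$), I would produce Helmholtz-type decompositions $a^k=\nabla v^k+z^k$ with $\divergence z^k=0$ and $\operatorname{curl}z^k=\operatorname{curl}a^k$, and $b^k=\nabla\beta^k+\zeta^k$ with $\divergence\zeta^k=0$, the potentials $v^k\in W^{1,p}_{0,\omega}(\Omega)$, $\beta^k\in W^{1,p'}_{0,\omega}(\Omega)$ and a vector potential of $\zeta^k$ being bounded and hence --- by the weighted Rellich theorem --- strongly precompact; these potentials, rather than the divergence- or curl-free parts, will carry the strong compactness. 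Expanding $a^k\cdot b^k$ along these decompositions, the terms containing a gradient factor are treated by integrating the corresponding divergence by parts onto a cut-off, Lipschitz-truncated copy of the strongly convergent potential and invoking \eqref{bit4}, while the remaining curl--curl term $z^k\cdot\zeta^k$ is treated by integrating the curl by parts onto a truncated vector potential, which exhibits it as $\operatorname{curl}a^k$ tested against an admissible field and lets \eqref{bit5} supply the decay. In each case the error produced by the Lipschitz truncation is supported on the truncation's bad set, whose $\omega$-measure is controlled by the weighted weak-type $(p,p)$ bound for $M$ and which, after a further application of the biting lemma ensuring that $|\nabla v^k|^p\omega$ is equi-integrable on each $E_j$ as well, is negligible there; assembling the various limits then recovers $\int_{E_j}a\cdot b\,\omega\phi\dx$ and identifies $\theta$.

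The main obstacle is that \eqref{bit4}--\eqref{bit5} are genuinely weaker than precompactness of $\divergence b^k$ or $\operatorname{curl}a^k$ in $W^{-1,p}$ --- they only test against bounded $W^{1,\infty}_0$ sequences with $\nabla c^k\rightharpoonup^*0$. This is what forces the scheme above, in which every appeal to \eqref{bit4}/\eqref{bit5} is routed through a Lipschitz truncation and the compactness is housed in the potentials rather than in the divergence/curl-free parts. Interlocked with this are two technical points. First, the Muckenhoupt weight $\omega$ is neither bounded nor differentiable, so it cannot be absorbed into the admissible test functions nor differentiated in an integration by parts; I would handle this by first replacing $\omega$ by its mollifications, which remain in $\Acal_p$ with a uniform constant, carrying out the argument with all bounds uniform in the regularization, and passing to the limit at the end --- using throughout the weighted maximal, Calder\'on--Zygmund, Rellich and Lipschitz-truncation estimates. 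Second, the densities $|\nabla v^k|^p\omega$ and the like are only bounded in $L^1$ and may concentrate, so the truncation errors are not small globally but only after restriction to the biting sets $E_j$, which is precisely why the conclusion \eqref{bitf} must be stated in biting form.
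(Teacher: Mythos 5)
First, a point of reference: the paper does not prove Theorem~\ref{T5} at all --- it is quoted verbatim from \cite[Theorem~2.6]{BulDieSch15} --- so there is no internal proof to compare your plan against. Judged on its own terms, your opening moves are correct and are indeed how the cited proof begins: since $\omega\in\Acal_p$ on a bounded set gives $\omega^{1-p'}\in L^1(\Omega)$, H\"older yields equi-integrability of $a^k$ and $b^k$, Dunford--Pettis gives \eqref{bitfa}--\eqref{bitfb}, Chacon's biting lemma applied to $(|a^k|^p+|b^k|^{p'})\omega$ produces the sets $E_j$, and Young's inequality gives weak $L^1(E_j)$ convergence of $a^k\cdot b^k\,\omega$ to some $\theta$ along a diagonal subsequence. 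You also correctly identify the central difficulty, namely that \eqref{bit4}--\eqref{bit5} can only be accessed through uniformly Lipschitz test sequences, which forces a Lipschitz-truncation route.

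The identification $\theta=a\cdot b\,\omega$, however, which is the entire content of \eqref{bitf}, is left as a programme, and two of the tools you invoke do not work as stated. First, the mollification of the weight: for a general $\Acal_p$ weight one only has $\omega_\delta\lesssim M\omega$ pointwise, and $M\omega\le C\omega$ holds only for $\Acal_1$ weights; hence $\sup_k\int|a^k|^p\omega_\delta\dx$ is \emph{not} controlled uniformly in $\delta$ by \eqref{bit3}, so the claim that all bounds are ``uniform in the regularization'' is unsubstantiated, and this is precisely the point where the weighted version of the lemma is genuinely harder than the unweighted one. Second, the Helmholtz decomposition of $b^k$: weighted Calder\'on--Zygmund theory gives boundedness of the Helmholtz projection on $L^{p'}_\omega$ only when $\omega\in\Acal_{p'}$, and since the classes $\Acal_r$ increase in $r$, the hypothesis $\omega\in\Acal_p$ implies $\omega\in\Acal_{p'}$ only for $p\le 2$; for $p>2$ the decomposition $b^k=\nabla\beta^k+\zeta^k$ with bounds in $L^{p'}_\omega$ is not available from your assumptions. (A decomposition of $b^k$ is in any case avoidable: once $a^k$ is replaced by an exact gradient up to a strongly convergent error, the pairing with $b^k$ can be handled by \eqref{bit4} alone.) Beyond these two concrete obstructions, none of the limit passages --- the verification that the truncated potentials form admissible sequences $c^k$ with $\nabla c^k\rightharpoonup^*0$, the control of the truncation's bad set in $\omega$-measure, and the reassembly of $\int_{E_j}a\cdot b\,\omega\,\phi\dx$ --- is carried out, so as it stands the proposal is a plausible but incomplete blueprint rather than a proof.
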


\smallskip

\begin{proof}[Proof of Theorem~\ref{T2}]
As before, we extend every function by zero outside $\Omega$, without further reference.
We approximate a given $f\in L^q(\Omega;\RdN)$, by $f_k:=\min\set{k,|f|}f/|f|$. Then $f_k\in L^q(\Omega;\RdN)\cap L^\infty(\Omega;\RdN)$, we find $|f^k|\nearrow |f|$ and
\begin{align}\label{cfn}
  f^k \to f &&\textrm{strongly in } L^q(\setR^n; \setR^{n\times N}).
\end{align} 
For $f^k$ we can use the standard monotone operator theory to find a solution $u^k\in W^{1,p}_0(\Omega; \mathbb{R}^{N})$ fulfilling
\begin{equation}
  \int_{\Omega} S(x,\nabla u^k) \cdot \nabla \phi \dx = \int_{\Omega}f^k
  \cdot \nabla \phi \dx   \qquad \textrm{ for all } \phi\in W^{1,p}_0(\Omega; \mathbb{R}^N). \label{wfn}
\end{equation}
Hence, we fix $\epsilon=p-q\in (0,\epsilon_0)$.  Then we find by \eqref{apriori} and the continuity of the maximal function that
\begin{align}
\label{est}
\begin{aligned}
\int_{\Omega} \frac{|\nabla u|^p}{(M(f+1))^{\varepsilon}}\dx &\le C(C_1,C_2,p,\Omega) \int_{\Omega} \frac{|f_k|^p+C_3}{(M(f+1))^{\varepsilon}}\dx\\
&\leq c\int M(f)^q\dx +c C_3\leq c\int \abs{f}^q\dx+cC_3. 
\end{aligned}
\end{align}
Moreover, by Young's inequality for the exponents $\frac{q}{p}+\frac{p-q}{p}=1$,
continuity of the maximal function and the previous we gain
\begin{align}
\label{est1}
\begin{aligned}
\int_{\Omega}|\nabla u|^q\dx&=\int_{\Omega} \frac{|\nabla u|^q (M(f+1))^\frac{(p-q)q}{p}}{(M(f+1))^\frac{(p-q)q}{p}}\dx \\
&\leq 
c\int_{\Omega} \frac{|\nabla u|^p}{(M(f+1))^{\varepsilon}}\dx + c\int_{\Omega}(M(f+1))^q\dx\leq c\int \abs{f}^q\dx+c(C_3+1). 
\end{aligned}
\end{align}
 We define the weight $\omega:\frac{1}{(M(f+1))^\epsilon}$. 
Using the a priori estimate, the reflexivity of the corresponding spaces and the growth assumption~\eqref{A2},  we can pass to a subsequence (still denoted
by $u^k$) such that
\begin{align}
  \label{conn-a}
  u^k &\rightharpoonup u &&\textrm{weakly in } W^{1,q}_0(\Omega; \setR^N),
  \\
  \label{conn-b}
  \nabla u^k &\rightharpoonup \nabla u &&\textrm{weakly in }
  L^p_{\omega}\cap L^{q}(\Omega; \setR^{n\times N}),
  \\
  S(x,\nabla u^k) &\rightharpoonup \overline{S} &&\textrm{weakly in }
 L^p_{\omega}\cap L^{q}(\Omega; \setR^{n\times N})\label{con2}.
\end{align}
Hence by \eqref{est1},\eqref{est} and \ref{conn-b} and the weak lower semicontinuity
 we obtain
\begin{align}
  \label{final}
  \int_\Omega \abs{\nabla u}^q+ \abs{\nabla u}^p\omega \dx &\leq
  c\int_\Omega\abs{f}^q\dx +c,
\end{align}
which concludes the a priori estimate.

We still have to show that~$u$ is a distributional solution.
 Using
\eqref{wfn}, \eqref{cfn} and \eqref{con2} it follows that
\begin{align}
  \label{eq:limit}
  \int_\Omega  \overline{S}\cdot\nabla \phi \dx=\int_\Omega f\cdot\nabla \phi
  \dx\qquad\text{ for all }\phi\in C^\infty_0(\Omega;\mathbb{R}^N).
\end{align}
To complete the proof of Theorem~\ref{T2}, it remains to show that
\begin{align}
  \overline{S}(x)&=S(x,\nabla u(x)) \qquad\textrm{in  } \Omega.\label{show22}
\end{align}
To do so, we use
Theorem~\ref{T5}. We denote $a^k:=\nabla u^k$ and $b^k:=S(x,\nabla
u^k)$. By using \eqref{final} and
\eqref{A2}, we find that \eqref{bit3} is satisfied with the weight $\omega$. 
Since, we assume by \eqref{epsilon} that $\epsilon<(p-1)$, we find by Lemma~\ref{cor:dual} that $\omega\in A_p$.
Also the assumption \eqref{bit4} holds, which follows from  \ref{cfn},
\eqref{wfn} and \eqref{eq:limit}. Finally, \eqref{bit5} is valid
trivially since $a^k$ is a gradient. Therefore, Theorem~\ref{T5} can
be applied. Meaning, that we have a non-decreasing sequence of measurable
sets $E_j$, such that $|\Omega \setminus E_j|\to 0$ and
\begin{equation*}
  S(x,\nabla u^k) \cdot \nabla u^k \omega \rightharpoonup \overline{S}
  \cdot  \nabla u\, \omega \qquad \textrm{weakly in } L^1(E_j).
\end{equation*}
This is enough, to apply some variant of the Minty trick.
For any $G\in L^{p}_{\omega}(\Omega; \setR^{n\times N})$
we get by~\ref{conn-b} and~\ref{con2}
\begin{equation*}
  (S(x,\nabla u^k)-S(x,G)) \cdot (\nabla u^k-G) \,\omega \rightharpoonup
  (\overline{S}-S(x,G)) \cdot (\nabla u-G)\, \omega\quad
  \textrm{weakly in }  L^1(E_j).
\end{equation*}
Due to the monotonicity condition~\eqref{A3} we find that the term on the left hand side is non-negative and consequently its weak limit is non-negative as well; especially
$\int_{E_j}(\overline{S}-S(x,G)) \cdot (\nabla u-G) \,\omega \dx \ge 0$ and
\begin{equation*}
\int_{\Omega}(\overline{S}-S(x,G)) \cdot (\nabla u-G)\, \omega \dx \ge \int_{\Omega\setminus E_j}(\overline{S}-S(x,G)) \cdot (\nabla u-G)\, \omega \dx.
\end{equation*}
Letting $j \to \infty$ the Lebesgue dominated convergence theorem implies using the fact that $|\Omega
\setminus E_j|\to 0$ as $j\to \infty$ we obtain
\begin{equation}
\label{minty3}
\int_{\Omega}(\overline{S}-S(x,G)) \cdot (\nabla u-G)\, \omega \dx \ge 0 \qquad \textrm{for all } G\in L^p_{\omega}(\Omega;\setR^{n\times N}).
\end{equation}
Hence, setting $G:=\nabla u -\delta H$ where $H\in L^{\infty}(\Omega; \setR^{n\times N})$ is an arbitrary function and dividing \eqref{minty3} by $\delta$ implies
\begin{equation*}
  \int_{\Omega}(\overline{S}-S(x,\nabla u - \delta H)) \cdot H\, \omega \dx \ge 0.  
\end{equation*}
Finally, 
letting $\delta \to 0_+$ implies by the continuity assumption of $S$ and dominated convergence
\begin{equation*}
\int_{\Omega}(\overline{S}-S(x,\nabla u)) \cdot H\, \omega \dx \ge 0 \qquad \textrm{for all } H\in L^{\infty}(\Omega;\setR^{n\times N}).
\end{equation*}
Since $\omega$ is strictly positive almost everywhere in $\Omega$, the relation \eqref{show22} easily follows by choosing e.g.,
$$
H:=-\frac{\overline{S}-S(x,\nabla u)}{1+\abs{\overline{S}-S(x,\nabla u)}}.
$$
Thus $u$ is a distributional solution to \eqref{EQ}.
\end{proof}
\begin{proof}[Proof of Corollary~\ref{C1}]
The proof of the a priori estimate~\eqref{apriori} is exactly the same. Analogous to the estimate of \eqref{est},\eqref{est1} one finds in case $C_3=0$, that
\begin{align}
\label{niceest}
\int_\Omega \abs{\nabla u_k}^q+\abs{\nabla u_k}^{p} (Mf)^{q-p}\dx\leq c\int_\Omega \abs{f}^q.
\end{align}
 The existence proof is also analogous. However, since Theorem~\ref{T5} is only valid on bounded domains one has to use that $\overline{S}$ satisfies \eqref{eq:limit} for an arbitrary bounded subset of $\Omega$. This allows to show that $\overline{S}=S(\nabla u)$ in $\Omega'$. Since it was arbitrarily chosen the existence follows. 
\end{proof}
\bibliographystyle{abbrv} 
\bibliography{lars}
\end{document}